\documentclass[12pt]{amsart} 
\usepackage{amsmath,amssymb,amsfonts,amsthm,amscd,amstext,amsxtra,amsopn,array,url,mathrsfs,enumerate,anysize,enumitem,mathabx} 
\marginsize{2.20cm}{2.20cm}{2.20cm}{2.20cm} 
\usepackage{times} 
\usepackage{amsmath} 
\usepackage{amssymb} 
\usepackage{amsfonts} 
\usepackage{xcolor} 
\usepackage{hyperref} 
\usepackage{amsthm}
\usepackage{tikz-cd}

\makeatletter
\@namedef{subjclassname@2020}{\textup{2020} Mathematics Subject Classification}
\makeatother

\usepackage{amsrefs} 
\newenvironment{rezabib} 
{\bibdiv\biblist\setupbib} 
{\endbiblist\endbibdiv} 

\def\setupbib{\catcode`@=\active} 
\begingroup\lccode`~=`@ 
\lowercase{\endgroup\def~}#1#{\gatherkey{#1}} 
\def\gatherkey#1#2{\gatherkeyaux{#1}#2\gatherkeyaux} 
\def\gatherkeyaux#1#2,#3\gatherkeyaux{\bib{#2}{#1}{#3}}

\newtheorem{theorem}{Theorem}[section] 
\newtheorem{proposition}[theorem]{Proposition}

\newtheorem{corollary}[theorem]{Corollary} 
\newtheorem{problem}[theorem]{Problem} 
\numberwithin{equation}{section} 

\theoremstyle{definition}

\newtheorem{notation}[theorem]{Notations}

\newtheorem{Remark}[theorem]{Remarks} 

\newcommand{\NN}{\mathbb{N}} 
\newcommand{\QQ}{\mathbb{Q}} 
\newcommand{\ZZ}{\mathbb{Z}} 

\DeclareMathOperator{\Gal}{Gal}

\DeclareMathOperator{\1}{1}

\DeclareMathOperator{\Aut}{Aut}

\DeclareMathOperator{\modd}{mod}
\DeclareMathOperator{\GL}{GL}

\DeclareMathOperator{\primen}{prime}

\DeclareMathOperator{\li}{li}
\DeclareMathOperator{\Hom}{Hom}
\DeclareMathOperator{\sgn}{sign}

\DeclareMathOperator{\proj}{proj}

\usepackage{mathtools}

\renewcommand{\Re}{{\mathfrak{Re}}}

\begin{document}

\begin{center} 

\title{Constants for Artin-like problems in Kummer and division fields} 

\author{Amir Akbary}
\address{Department of Mathematics and Computer Science, University of Lethbridge, Lethbridge, Alberta T1K 3M4, Canada}
\email{amir.akbary@uleth.ca}

\author{Milad Fakhari} 
\address{Department of Mathematics and Computer Science, University of Lethbridge, Lethbridge, Alberta T1K 3M4, Canada} 
\email{milad.fakhari@uleth.ca}

\subjclass[2020]{11N37, 11A07} 
\keywords{Generalized Artin problem, character sums, Titchmarsh divisor problems in the family of number fields}

\date{\today}

\begin{abstract} 
We apply the character sums method of Lenstra, Moree, and Stevenhagen to explicitly compute the constants in 
the Titchmarsh divisor problem for Kummer fields and division fields of Serre curves. We derive our results as special cases of a general result on the product expressions for the sums in the form $$\sum_{n=1}^{\infty}\frac{g(n)}{\#G(n)}$$ in which $g(n)$ is a multiplicative arithmetic function and $\{G(n)\}$ is a certain family of Galois groups. Our results extend the application of the character sums method to the evaluation of constants, such as the Titchmarsh divisor constants, that are not density constants.
\end{abstract} 
\maketitle 
\end{center}

\section{Introduction}\label{sec:intro}

Let $a$ be a non-zero integer that is not $\pm1$.  Let $h$ be the largest integer for which $a$ is a perfect $h$-th power. 
In 1927, Emil Artin proposed a conjecture for the density of primes $q$ for which a given integer $a$ is a primitive root modulo $q$. More precisely, Artin conjectured that the density is

\begin{equation}
\label{ArtinConstant}
    A_a=\displaystyle\prod_{p \: \primen}\left(1-\frac{1}{[\mathbb{Q}(\zeta_p, a^{1/p}):\mathbb{Q}]}\right)=\displaystyle\prod_{\substack{p \: \primen \\ p\mid h}}\left(1-\frac{1}{p-1}\right)\displaystyle\prod_{\substack{p \: \primen \\ p\nmid h}}\left(1-\frac{1}{p(p-1)}\right).
\end{equation}
Here $\zeta_p$ is a primitive $p$-th root of unity in a fixed algebraic closure of $\mathbb{Q}$. 
Observe that $A_a=0$ if $a$ is a perfect square as $[\mathbb{Q}(\zeta_2, a^{1/2}):\mathbb{Q}]=1$ for such $a$. 

In 1957, computer calculations of the approximate density for various values of $a$ by D. H. Lehmer and E. Lehmer revealed some discrepancies from the conjectured value $A_a$ (see \cite[Section 2]{S}). The reason for these inconsistencies is the dependency between the splitting conditions in \emph{Kummer fields} $\QQ(\zeta_p,a^{1/p})$. 

To deal with these dependencies, Artin suggested an \textit{entanglement correction factor} that appears when $a_{sf}\equiv1~({\rm mod}~4)$, where $a=a_{sf}\cdot b^2$ in which $b$ is the largest integer such that $b^2$ divides $a$
(see the preface to Artin's collected works \cite{artin}). 
More precisely, 
the corrected conjectured density $\delta_a$ is
\begin{equation}
\label{artin-corrected}
 \delta_a=
    \begin{cases}
    A_a &\quad\quad\text{if }a_{sf}\nequiv1~({\rm mod}~{4}),\\
    E_a\cdot A_a&\quad\quad\text{if }a_{sf}\equiv1~({\rm mod}~{4}),
    \end{cases}
\end{equation}
where 
\begin{equation}
\label{Hooley}
E_a=1-\mu(|a_{sf}|)\displaystyle\prod_{\substack{p \mid h\\ p\mid a_{sf}} }\frac{1}{p-2}\displaystyle\prod_{\substack{p \nmid h\\ p\mid a_{sf}} }\frac{1}{p^2-p-1}.
\end{equation}
Here, $\mu(.)$ is the M\"obius function.
Hooley \cite{Hooley} proved the modified conjecture in 1967 under the assumption of the \emph{Generalized Riemann Hypothesis} (GRH) for the {Kummer fields} $K_n=\mathbb{Q}(\zeta_n, a^{1/n})$ for square-free values of $n$. For any $n$, let $G(n)$ be the Galois group of $K_n/\QQ$. More precisely, Hooley proved, under the GRH, that the primitive root density is
\begin{equation}
\label{hooley-sum}
    \sum_{n=1}^{\infty}\frac{\mu(n)}{\#G(n)},
\end{equation}
and then showed that the above sum equals the corrected conjectured density $\delta_a$ in \eqref{artin-corrected}.

In \cite{lenstra-stevenhagen-moree}, Lenstra, Moree, and Stevenhagen introduced 
a method which allows one to find
product expressions for densities in Artin-like problems. Their method directly studies the primes that do not split completely in a Kummer family attached to $a$, without considering the summation expressions such as \eqref{hooley-sum} for the densities. In  \cite[Theorem 4.2]{lenstra-stevenhagen-moree}, they express the correction factor \eqref{Hooley}, when $a$ is non-square and the discriminant $d$ of $K_2=\QQ(a^{1/2})$ is odd (equivalently $a$ is non-square and $a_{sf} \equiv 1$ (mod $4$)), as
\begin{equation}
\label{artin-lenstra-product}
E_a=1+\displaystyle\prod_{p \mid 2d }\frac{-1}{\#G(p)-1}.
\end{equation}
The authors of \cite{lenstra-stevenhagen-moree} achieve this by constructing a quadratic character $\chi=\prod_{p} \chi_p$ of a certain profinite group $A=\prod_{p} A_p$ such that $\ker \chi=\Gal(K_\infty/\mathbb{Q})$, where $K_{\infty}=\bigcup_{n\geq1}K_n$ (see Section 2 for details). They derive \eqref{artin-corrected} as a special case of the following general theorem (\cite[Theorem 3.3]{lenstra-stevenhagen-moree}) in the context of profinite groups.

\begin{theorem}[{Lenstra-Moree-Stevenhagen}]
\label{LMS}
Let $A=\prod_{p} A_p$, with Haar measure $\nu=\prod_{p} \nu_p$, and the quadratic character $\chi=\prod_{p} \chi_p:A\rightarrow \{\pm 1\}$ a non-trivial character obtained from a family of continuous quadratic characters $\chi_p: A_p \rightarrow \{\pm 1\}$, with $\chi_p$ trivial for almost all primes $p$. Then for $G=\ker \chi$ and $S=\prod_p S_p$, a product of $\nu_p$-measurable subsets $S_p\subset A_p$ with $\nu_p(S_p)>0$, we have
$$\frac{\nu(G\bigcap S)}{\nu(G)} =\left(1+\prod_p \frac{1}{\nu_p(S_p)} \int_{S_p} \chi_p d\nu_p  \right)\cdot \frac{\nu(S)}{\nu(A)}.$$
\end{theorem}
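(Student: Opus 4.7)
The plan is to exploit the fact that $\chi$ is quadratic, so its kernel is characterized by a very clean indicator identity. Since $\chi:A\to\{\pm1\}$, the function $(1+\chi)/2$ equals $1$ on $G=\ker\chi$ and $0$ on its complement, so $\mathbf{1}_G=(1+\chi)/2$ pointwise. Integrating over $S$ against $\nu$ then yields
\[
\nu(G\cap S)=\int_S \mathbf{1}_G\,d\nu=\frac{1}{2}\nu(S)+\frac{1}{2}\int_S\chi\,d\nu.
\]
This reduces the problem to evaluating the "twisted" integral $\int_S\chi\,d\nu$ and relating $\nu(G)$ to $\nu(A)$.

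For the twisted integral I would use the product structure. Because $S=\prod_p S_p$ and $\chi=\prod_p\chi_p$ with $\chi_p\equiv 1$ for almost all $p$, Fubini's theorem for the product Haar measure $\nu=\prod_p\nu_p$ gives
\[
\int_S\chi\,d\nu=\prod_p\int_{S_p}\chi_p\,d\nu_p,
\]
where only finitely many factors differ from $\nu_p(S_p)$ (so convergence is not an issue). Writing $\nu(S)=\prod_p\nu_p(S_p)$ and factoring one normalization into each integral converts this into
\[
\int_S\chi\,d\nu=\nu(S)\cdot\prod_p\frac{1}{\nu_p(S_p)}\int_{S_p}\chi_p\,d\nu_p,
\]
which accounts for the product that appears in the statement.

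For the denominator, I would use the nontriviality hypothesis on $\chi$. As $\chi$ is a nontrivial continuous quadratic character, $G$ is a closed subgroup of index $2$ in $A$, and by translation invariance of $\nu$ we get $\nu(G)=\nu(A)/2$. Substituting into the display for $\nu(G\cap S)$ and dividing gives
\[
\frac{\nu(G\cap S)}{\nu(G)}=\frac{\nu(S)+\int_S\chi\,d\nu}{\nu(A)}=\left(1+\prod_p\frac{1}{\nu_p(S_p)}\int_{S_p}\chi_p\,d\nu_p\right)\cdot\frac{\nu(S)}{\nu(A)},
\]
which is the desired identity.

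The only nontrivial step is the passage from the integral over $A$ to the product of local integrals. The hypothesis that $\chi_p$ is trivial for almost all $p$ is essential: all but finitely many factors in the product collapse to $1$, so $\int_S\chi\,d\nu$ reduces to a finite product of integrals, and Fubini applies unambiguously. Once that is in place, the rest is bookkeeping with the identity $\mathbf{1}_{\ker\chi}=(1+\chi)/2$ and the index-$2$ observation $\nu(G)=\nu(A)/2$.
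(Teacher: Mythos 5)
Your proof is correct, and it is the standard argument. One point of order: the paper does not actually prove this statement itself --- Theorem \ref{LMS} is imported from Lenstra--Moree--Stevenhagen \cite[Theorem 3.3]{lenstra-stevenhagen-moree} --- so there is no internal proof to compare against line by line. Your route does, however, coincide with the method the paper uses for its own generalization, Theorem \ref{main1}: the orthogonality identity $\sum_{i=0}^{m-1}\chi^i = m\cdot 1_{\ker\chi}$ used there is exactly the $\mu_m$-analogue of your identity $1_G=(1+\chi)/2$, and the computation $\nu_A(\ker\chi)=1/m$ is the analogue of your index-two observation $\nu(G)=\nu(A)/2$; so your argument is the $m=2$ specialization of the paper's technique, applied to $\nu(G\cap S)$ rather than to a sum over $n$. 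The one step that genuinely needed the care you gave it is the regrouping $\int_S\chi\,d\nu=\nu(S)\cdot\prod_p \nu_p(S_p)^{-1}\int_{S_p}\chi_p\,d\nu_p$: since $\chi_p$ is trivial for almost all $p$, the correction product has all but finitely many factors equal to $1$ and each factor lies in $[-1,1]$, so it converges, and the identity also holds in the degenerate case $\nu(S)=\prod_p\nu_p(S_p)=0$, where both sides vanish because $\lvert\chi\rvert=1$. With that observed, the splitting of the infinite product and the final division are legitimate, and the proof is complete.
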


The above theorem shows that if $\frac{\nu(G\bigcap S)}{\nu(G)}\neq \frac{\nu(S)}{\nu(A)}$, then the density of $G \bigcap S$ in $G$ can be expressed as the
density of $S$ in $A$ multiplied by a correction factor that
can be written explicitly in terms of the average of local characters $\chi_p$ over $S_p$. Moreover, since $\nu=\prod_{p} \nu_p$, $S=\prod_p S_p$, and $A=\prod_{p} A_p$, the quotient $\nu(S)/\nu(A)$ can be written as a product over primes $p$.

Our goals in this paper are two-fold. In one direction, in Theorem \ref{main1} and Corollary \ref{cor-after-main1},  we will show how the character sums method of \cite{lenstra-stevenhagen-moree} can be adapted to directly deal with the sum 
obtained by replacing $\mu(n)$ in \eqref{hooley-sum} by a general multiplicative function $g(n)$. This is an approach different from the one given in Theorem \ref{LMS} in which a density given as a product, i.e., $\nu(S)/\nu(A)$, is corrected to another density, i.e.,  $\nu(G\bigcap S)/\nu(G)$, which is not explicitly given as an infinite sum. In another direction, we describe how the method of \cite{lenstra-stevenhagen-moree}  can be adapted to derive product expressions for the general sums similar to \eqref{hooley-sum} in which $\mu(n)$ is replaced by a multiplicative arithmetic function that could be supported on non-square free integers (all the examples given in \cite{lenstra-stevenhagen-moree} are dealing with arithmetical functions supported on square-free integers). Such arithmetic sums appear naturally on many Artin-like problems (i.e., problems related to distributions of functions of the residual indices of integers modulo primes or subsets of primes). In addition, some of them, such as Titchmarsh divisor problems for families of number fields, are not problems related to the natural density of subsets of integers. In this direction, our Theorem \ref{product-kummer-family} provides a product formula for the constant appearing in the Generalized Artin Problem for multiplicative functions $f$ (see Problem \ref{generalized}) in full generality.

%

We continue with our general setup. Let $a=\pm a_0^e$, where 
$e$ is the largest positive integer such that $\lvert a\rvert$ is a perfect $e$-th power,
and $\sgn(a_0)=\sgn(a)$. (Note that the exponent $e$ can differ from the exponent $h$ defined at the beginning of the introduction. For example, if $a=-3^2$, then $e=2$, however, $h=1$.) In our arguments, the integer $a$ is fixed; so we suppress the dependency on $a$ in most of our notations. We fix a solution of the equation $x^2-a_0=0$ and denote it by $a_0^{1/2}$. The quadratic field $K=\QQ(a_0^{1/2})$, the so-called \emph{entanglement field} in \cite[p. 495]{lenstra-stevenhagen-moree}, plays an important role in our arguments (see Section 2 for the justification of the terminology). We denote the discriminant of $K$ by $D$.
Observe that 
for an integer $a \neq 0, \pm 1$, we have three different cases based on the parity of the exponent $e$ and the sign of $a$: 
\begin{itemize}
\item[] (i) \emph{Odd exponent case}, in which $e$ is odd; 
\item[] (ii) \emph{Square case}, in which $e$ is even and $a>0$; 
\item[] (iii) \emph{Twisted case}, in which $e$ is even and $a<0$. 
\end{itemize}

We refer to cases (i) and (ii) as \emph{untwisted} cases. Note that for odd exponent case $K=K_2$, for square case $K_2=\mathbb{Q}$ and $K\neq K_2$, and for twisted case $K_2=\mathbb{Q}(i)$ and $K\neq K_2$. 

For a Kummer family $\{K_n\}$, the Galois elements in $G(n)=\Gal(K_n/\QQ)$ are determined by their actions on the $n$-th roots of $a$ and the $n$-th roots of unity. Thus, any Galois automorphism can be realized as a group automorphism of the multiplicative group $$R_n=\{\alpha\in\overline{\mathbb{Q}}^{\times};\;\alpha^n\in\langle a\rangle\},$$ the group of \emph{$n$-radicals} of $a$. This yields the injective homomorphisms
\begin{equation}
\label{embed}
    r_n:G(n)\to A(n):=\Aut_{\QQ^{\times}\cap R_n}(R_n),
\end{equation}
where $A(n)$ is the group of automorphisms of $R_n$ fixing elements of $\QQ^\times$. For $n=\prod_{p^k\Vert n} p^k$ we have $A(n)\cong \prod_{p^k\Vert n} A(p^k)$. Let $\nu_p(e)$ denote the multiplicity of $p$ in $e$. Let $\Phi(n)$ be the Euler totient function.  
For odd $p$, $$\#A(p^k)= p^{k-\min\{k, \nu_p(e)\}}\Phi(p^k),$$ and for $p=2$,
$$\#A(2^k)=\begin{cases}
2^{k-\min\{k, s-1\}}\Phi(2^k)&\text{if}~e~\text{is~odd~or}~a>0,\\
2^{k-\min\{k, s-1\}}\Phi(2^{k+1})&\text{if}~e~\text{is~even~and}~a<0, 
\end{cases}$$ 
where 
\begin{equation}
\label{s-def}
s=\begin{cases}
\nu_2(e)+1&\text{if}~e~\text{is~odd~or}~ a>0,\\
\nu_2(e)+2& \text{if}~e~\text{is}~\text{even~and}~a<0.
\end{cases}
\end{equation}
In particular, $\#A(p)=\#G(p)$. (See Proposition \ref{new-prop} for a proof of these claims.)


The following theorem, related to the family of Kummer fields $K_n$, gives us the product expressions of a large family of summations involving the orders of the Galois groups of $K_n/\QQ$.
\begin{theorem}
\label{product-kummer-family}
Let $a=\pm a_0^e$, where $a_0$ and $e$ are defined as above, $K=\QQ(a_0^{1/2})$, and let $D$ be the discriminant of $K$.
Let $g$ be a multiplicative arithmetic function such that 
\begin{equation*}
    \sum_{n\geq1}^{\infty}\frac{\lvert g(n)\rvert}{\#G(n)}<\infty,
\end{equation*}
where $G(n) =
{\rm Gal}(\mathbb{Q}(\zeta_n, a^{1/n})/\mathbb{Q})$ for all $n \geq 1$.
Let 
$A(n)$ be as defined above. 
Then, 
\begin{equation}
\label{product-kummer}
    \sum_{n=1}^{\infty}\frac{g(n)}{\#G(n)}=\prod_p\sum_{k\geq 0}\frac{g(p^k)}{\#A(p^k)}+
    \prod_{p}\sum_{k\geq\ell(p)}\frac{g(p^k)}{\#A(p^k)},
\end{equation}
where
$$\ell(p)=
\left\{\begin{array}{ll}
0&\text{if~} p~ \text{is~ odd~and~}p\nmid D,\\1&\text{if~} p~ \text{is~ odd~and~}p\mid D,\\
s&\text{if}~ p=2~ \text{and}~ D ~\text{is odd},\\
\max\{2,s\}&\text{if}~ p=2~ \text{and}~ 4\Vert D,\\

2&\text{if}~ p=2,~8\Vert D,~\text{and}~(\nu_2(e)=1~\text{and}~a<0),\\  
\max\{3,s\}&\text{if}~ p=2,~ 8\Vert D,~\text{and}~(\nu_2(e)\neq 1~\text{or}~a>0).\end{array}
\right.
$$
\end{theorem}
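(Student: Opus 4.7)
The plan is to exploit the embedding $r_n : G(n) \hookrightarrow A(n)$ of \eqref{embed} together with the quadratic character $\chi = \prod_p \chi_p$ of \cite{lenstra-stevenhagen-moree} to decompose $1/\#G(n)$ as a sum of two multiplicative-in-$n$ quantities, and then factor each resulting sum as an Euler product over primes.

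The first structural step is the identity
\begin{equation*}
\frac{1}{\#G(n)} = \frac{1 + \epsilon(n)}{\#A(n)}, \qquad \epsilon(n) = \mathbf{1}\{[A(n):G(n)] = 2\}.
\end{equation*}
Passing to the inverse limit in \eqref{embed} identifies $\mathrm{Gal}(K_\infty/\mathbb{Q})$ with the index-two subgroup $\ker \chi \subseteq A = \prod_p A_p$. The image of $r_n$ is the projection of $\ker \chi$ to $A(n)$; this projection is all of $A(n)$ when $\chi$ does not descend to $A(n)$, and a subgroup of index two when it does. Because $\chi = \prod_p \chi_p$ and $A(n) = \prod_p A(p^{\nu_p(n)})$, descent of $\chi$ to $A(n)$ is equivalent to descent of every local character $\chi_p$ to $A(p^{\nu_p(n)})$, i.e., to $\nu_p(n) \geq \ell(p)$ where $\ell(p)$ is the conductor of $\chi_p$ (the smallest $k$ with $\chi_p$ trivial on $\ker(A_p \to A(p^k))$). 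Hence $\epsilon(n) = \mathbf{1}\{\nu_p(n) \geq \ell(p) \text{ for all } p\}$.

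Next I multiply by $g(n)$ and sum over $n$. The convergence hypothesis, together with $\#G(n) \leq \#A(n)$, gives absolute convergence of $\sum g(n)/\#A(n)$ and of the truncated sum. Since $g$ and $\#A$ are multiplicative and the local conditions $\nu_p(n) \geq \ell(p)$ split across primes, the standard Euler product expansion yields
\begin{equation*}
\sum_{n=1}^\infty \frac{g(n)}{\#G(n)} = \prod_p \sum_{k \geq 0} \frac{g(p^k)}{\#A(p^k)} + \prod_p \sum_{k \geq \ell(p)} \frac{g(p^k)}{\#A(p^k)},
\end{equation*}
which is the desired formula.

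All that remains is to identify $\ell(p)$ with the values in the statement. For odd $p \nmid D$ the entanglement field $K = \mathbb{Q}(\sqrt{a_0})$ is unramified at $p$, so $\chi_p$ is trivial and $\ell(p) = 0$. For odd $p \mid D$, the subfield $\mathbb{Q}(\sqrt{p^*}) \subseteq \mathbb{Q}(\zeta_p)$ forces a nontrivial entanglement already at level $p$, giving $\ell(p) = 1$. The substantive case is $p = 2$, where $\ell(2)$ is controlled by two competing thresholds: the cyclotomic one, $\nu_2(|D|) \in \{0, 2, 3\}$, measuring the depth at which $K \subset \mathbb{Q}(\zeta_{2^k})$ becomes possible; and the radical one, $s$ from \eqref{s-def}, governing when the Kummer tower first produces $\sqrt{a_0}$. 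Generically $\ell(2) = \max(\nu_2(|D|), s)$. However, in the exceptional configuration $8 \Vert D$, $\nu_2(e) = 1$, $a < 0$, a primitive fourth root of $a$ factors as a rational multiple of $1+i$, so $K_4 = \mathbb{Q}(i)$ already supports an accidental index-two entanglement and $\ell(2)$ drops to $2$, below the generic value $\max(3,s) = 3$.

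The main obstacle will be this $p = 2$ analysis: one must explicitly compute $\mathbb{Q}(\zeta_{2^k}) \cap \mathbb{Q}(a^{1/2^k})$ (after careful normalization of the chosen radicals $a_0^{1/2}$ and $a^{1/n}$), verify the generic formula $\max(\nu_2(|D|), s)$ in each of the three cases for $\nu_2(|D|)$, and confirm the exceptional drop in the twisted configuration. The explicit description of the local group $A(2^k)$ provided by Proposition \ref{new-prop} will supply the required arithmetic input for this case check.
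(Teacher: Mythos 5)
Your proposal is correct, and its structural half takes a genuinely different route from the paper. The paper proves Theorem \ref{product-kummer-family} by passing through the Haar-measure machinery of Theorem \ref{main1} and Corollary \ref{cor-after-main1}: it writes $1/\#G(n)=\nu_G(\ker\pi_{G,n})$, converts this via the exact sequence into integrals of $\Tilde{g}$ against $1_{\ker\chi}=\tfrac12(1+\chi)$ over $A$, and factors the two resulting integrals over the local groups $A_p$, the conductor statement of Proposition \ref{character} being what makes $\int_{A_p}\Tilde{g}_p\chi_p\,d\nu_{A_p}$ start at level $\ell(p)$. You instead work entirely at finite level: since $r(G)=\ker\chi$ has index $2$ in $A$ and the transition maps are surjective, the image $r_n(G(n))=\pi_{A,n}(\ker\chi)$ has index $2$ in $A(n)$ exactly when $\chi$ is trivial on $\ker\pi_{A,n}$ (equivalently $\nu_p(n)\geq\ell(p)$ for all $p$) and index $1$ otherwise, giving $1/\#G(n)=(1+\epsilon(n))/\#A(n)$; the theorem then follows by splitting the sum and factoring each piece. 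One small imprecision: $\epsilon(n)/\#A(n)$ is not literally a multiplicative function (it vanishes at $n=1$ whenever some $\ell(p)\geq1$); the correct statement is that the second piece is the sum of the multiplicative function $g(n)/\#A(n)$ over multiples of $\prod_p p^{\ell(p)}$, which factors into the tail products by absolute convergence. Your route is more elementary (no measure theory at all), while the paper's Theorem \ref{main1} buys generality: it handles non-multiplicative $g$, characters of order $m>2$, and is reused verbatim for the Serre-curve family in Section \ref{Serre section}.

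Two caveats on the arithmetic half, which in your proposal is deferred rather than carried out. First, the identification of $\ell(2)$ is where the real work lies: when the conductors of $\psi_{K,2}$ (which is $s$, by Proposition \ref{factoring}) and of $\chi_{D,2}$ coincide, the conductor of the product can drop, so the ``generic formula'' $\max$ is not automatic precisely in the equal-conductor cases; this is the case-by-case matrix computation of Proposition \ref{character} built on Proposition \ref{new-prop}, and your plan correctly points there. Second, your stated reason for the exceptional drop is wrong in general: in the configuration $8\Vert D$, $\nu_2(e)=1$, $a<0$, a fourth root of $a$ need not be a rational multiple of $1+i$, and $K_4\neq\QQ(i)$ in general. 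For instance $a=-36$ ($a_0=-6$, $D=-24$) gives $a^{1/4}=\sqrt{3}\,(1+i)$ and $K_4=\QQ(i,\sqrt{6i})$, a quartic field. The correct mechanism, which the paper's Case 1 of Proposition \ref{character} establishes by explicit matrices, is a cancellation: both $\psi_{K,2}$ and $\chi_{D,2}$ are nontrivial on $\ker\varphi_{2^2}$, but they agree there (field-theoretically, because $a_0^{1/2}$ equals $\zeta_8\, a^{1/4}$ up to a rational factor, tying the radical and cyclotomic actions together), so their product $\chi_2$ is trivial on $\ker\varphi_{2^2}$ and $\ell(2)=2$. The value you claim is right; the justification you give for it is not, so that step must be replaced by the actual computation.
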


\begin{Remark}
(i) In the summation \eqref{hooley-sum} appearing in Artin's primitive root conjecture, we have $g(n)=\mu(n)$. In this case, formula \eqref{product-kummer} for $g(n)=\mu(n)$ provides a unified way of expressing the constant in Artin's primitive root conjecture as a sum of products over primes.
Note that if $e$ is even and $a>0$, i.e., $a$ is a perfect square, we have $$\sum_{k\geq 0}\frac{\mu(2^k)}{\#A(2^k)}= 0~~{\rm ~~ and}~\sum_{k\geq\ell(2)}\frac{\mu(2^k)}{\#A(2^k)}= 0,$$
where the first sum is zero since $\#A(2)=1$, and the second sum is zero since $\ell(2)\geq 2$. Hence, \eqref{product-kummer} vanishes. Also, if $e$ is even and $a<0$, then $\ell(2)\geq 2$. Thus, \eqref{product-kummer} reduces to \eqref{ArtinConstant}. If $e$ is odd and $D$ is even, then again $\ell(2)\geq 2$ and \eqref{product-kummer} reduces to \eqref{ArtinConstant}. The only remaining case is when $e$ is odd and $D$ is odd (equivalently $e$ odd and $a_{sf}\equiv 1$ (mod $4$)), where \eqref{product-kummer} reduces to $E_a \cdot A_a$ given in \eqref{artin-corrected}.

{(ii) As a consequence of Theorem 1.1, we can derive necessary and sufficient conditions for the vanishing of 
\begin{equation}
\label{constant}
\sum_{n=1}^{\infty}\frac{g(n)}{\#G(n)}.
\end{equation}
More precisely, \eqref{constant} vanishes if and only if one of the following holds:

\noindent (a) For a prime $p\nmid 2D$,  we have $\displaystyle{\sum_{k\geq 0}\frac{g(p^k)}{\#A(p^k)}=0}$.

\noindent (b) We have $$\prod_{p\mid 2D} \sum_{k\geq 0}\frac{g(p^k)}{\#A(p^k)}+ \prod_{p\mid 2D} \sum_{k\geq \ell(p)}\frac{g(p^k)}{\#A(p^k)}=0.$$

In the case of Artin's conjecture, (a) is never satisfied and (b) holds if and only if $a$ is a perfect square.
}

{(iii) If $\#G(n)$ were a multiplicative function, then the sum in \eqref{product-kummer} would have been equal to the {product} $\prod_p\sum_{k\geq 0}\frac{g(p^k)}{\#G(p^k)}$. However, this is not the case for the Kummer family, and thus, the sum in \eqref{product-kummer} may differ from the above naive product. If the sum and the product are not equal, then  
a complex number $E_{a, g}$ is called a \emph{correction factor} if 
\begin{equation*}
    \sum_{n=1}^{\infty}\frac{g(n)}{\#G(n)}=E_{a, g} \prod_p\sum_{k\geq 0}\frac{g(p^k)}{\#G(p^k)}.
\end{equation*}
The expression \eqref{product-kummer} provides precise information on the correction factor $E_{a, g}$. In fact, 
if  \\$\sum_{k\geq0}\frac{g(p^k)}{\#G(p^k)}\neq0$ for all primes $p\mid 2D$, we have
\begin{equation*}
    \sum_{n=1}^{\infty}\frac{g(n)}{\#G(n)}=\left(\frac{\displaystyle{\prod_{p\mid 2D}\sum_{k\geq 0}\frac{g(p^k)}{\#A(p^k)}+    \prod_{p\mid 2D}\sum_{k\geq\ell(p)}\frac{g(p^k)}{\#A(p^k)}}}{\displaystyle{\prod_{p\mid 2D}\sum_{k\geq 0} \frac{g(p^k)}{\#G(p^k)}}}\right)\prod_p \sum_{k\geq 0}\frac{g(p^k)}{\#G(p^k)}.
\end{equation*}

On the other hand, if $\sum_{k\geq0}\frac{g(p^k)}{\#G(p^k)}=0$ for some prime $p\mid 2D$, and $\sum_{n\geq1}\frac{g(n)}{\#G(n)}\neq 0$, then 
the {product} $\prod_p\sum_{k\geq 0}\frac{g(p^k)}{\#G(p^k)}$ cannot be corrected.

}

(iv) For integer $a \neq 0, \pm 1$, let $n_a=\prod_{p\mid 2D} p^{\ell(p)}$, where $D$ and $\ell(p)$ are as in Theorem \ref{product-kummer-family}. Then, by taking $g(n)=1/n^z$, for $\Re(z)>0$, in Theorem \ref{product-kummer-family} and comparing the coefficients of $1/n^z$ in both sides of \eqref{product-kummer}, we get
$$
[\mathbb{Q}(\zeta_n, a^{1/n}):\mathbb{Q}]=\begin{cases}
\#A(n)&\text{if } n_a\nmid n,\\
\frac{1}{2}\#A(n)&\text{if } n_a\mid n.
\end{cases}
$$

\end{Remark}

\medskip\par

The formula \eqref{product-kummer} can be used to study the constants in many Artin-like problems.  We next apply this formula in the computation of the average value of a specific arithmetic function attached to a Kummer family. 
More precisely, for $\{K_n:=\QQ(\zeta_n,a^{1/n})\}_{n\geq 1}$, we define
\begin{equation*}
    \tau_{a}(p)=\#\left\{n\in\NN ;\; p\text{ splits completely in }K_n/\QQ\right\}.   
\end{equation*}
The Titchmarsh divisor problem attached to a Kummer family concerns the behaviour of $\sum_{p\leq x}\tau_a(p)$ as $x\to\infty$ (see \cite{AG} for the motivation behind this problem and its relation with the classical Titchmarsh divisor problem on the average value of the number of divisors of shifted primes). Under the assumption of the GRH for the Dedekind zeta function of $K_n/\QQ$ for $n\geq1$, Felix and Murty \cite[Theorem 1.6]{felix-murty} proved that  
\begin{equation}
\label{felix-murty-tdp}
\sum_{p\leq x}\tau_a(p)\sim \left(\sum_{n\geq1}\frac{1}{[K_n:\QQ]}\right)\cdot \li(x),
\end{equation}
as $x\to\infty$, where $\li(x)=\int_2^x \frac{1}{\log t}dt$. They do not provide an Euler product expression for the constant appearing in the main term of \eqref{felix-murty-tdp}. As a direct consequence of 
Theorem \ref{product-kummer-family} with $g(n)=1$, we readily find an explicit product formula for the constant appearing in \eqref{felix-murty-tdp}. 
\begin{proposition}
\label{TDPK-formula}
Let $a=\pm a_{0}^e$ with  $e=\prod_pp^{\nu_p(e)}$, and let $D$ be the discriminant of $K=\QQ(a_0^{1/2})$. Then, if $e$ is odd or $a>0$, 
\begin{equation}
    \label{kummer-tdp-lastproduct}
    \begin{split}
    \sum_{n\geq1}\frac{1}{[K_n:\QQ]}=& \left(1+\frac{c_0}{3\cdot2^{\nu_2(e)}-2}\prod_{p\mid2D}\frac{p^{\nu_p(e)+2}+p^{\nu_p(e)+1}-p^2}{p^{\nu_p(e)+3}+p^{\nu_p(e)}-p^2}\right)\\
    &~~~~~~~~~~~\times\prod_p\left(1+\frac{p^{\nu_p(e)+2}+p^{\nu_p(e)+1}-p^2}{p^{\nu_p(e)}(p-1)(p^2-1)}\right),
    \end{split}
\end{equation}
where
\begin{equation*}
    c_0=
    \begin{cases}
    1/4 & \text{ if } 4\Vert D\;\text{and}\;\nu_2(e)=0,\text{ or if } 8\Vert D ~\text{and}~ \nu_2(e)=1, \\
    1/16 & \text{ if } 8\Vert D\;\text{and}\; \nu_2(e)=0,\\
    1 & \text{ otherwise.}
    \end{cases}
\end{equation*}
If $e$ is even and $a<0$ (i.e., the twisted case)
 \begin{equation}
    \label{kummer-tdp-lastproduct-2}
    \begin{split}
    \sum_{n\geq1}\frac{1}{[K_n:\QQ]}=& \left(1+\frac{c_0}{3\cdot2^{\nu_2(e)+2}-2}\prod_{\substack{p\mid D\\ p\neq 2}}\frac{p^{\nu_p(e)+2}+p^{\nu_p(e)+1}-p^2}{p^{\nu_p(e)+3}+p^{\nu_p(e)}-p^2}\right)\\
    &~~~~~~~~~~~\times    \left(1+\frac{2^{\nu_2(e)+2}-2^{\nu_2(e)}-1}{3\cdot 2^{\nu_2(e)}}\right)\prod_{\substack{p\\p\neq 2}}\left(1+\frac{p^{\nu_p(e)+2}+p^{\nu_p(e)+1}-p^2}{p^{\nu_p(e)}(p-1)(p^2-1)}\right),
    \end{split}
\end{equation}
where 

\begin{equation*}
    c_0=
    \begin{cases}
    4 & \text{ if } 8\Vert D\;\text{and}\; \nu_2(e)=1,\\
    1 & \text{ otherwise.}
    \end{cases}
\end{equation*}

\end{proposition}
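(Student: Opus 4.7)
The plan is to specialize Theorem \ref{product-kummer-family} to $g(n)=1$, which gives immediately
$$\sum_{n\geq 1}\frac{1}{[K_n:\mathbb{Q}]} \;=\; \prod_p S_p \;+\; \prod_p S_p^{(\ell)},$$
where $S_p:=\sum_{k\geq 0}\#A(p^k)^{-1}$ and $S_p^{(\ell)}:=\sum_{k\geq \ell(p)}\#A(p^k)^{-1}$. For $p\nmid 2D$ we have $\ell(p)=0$, hence $S_p^{(\ell)}=S_p$; this collapses the correction into a product over $p\mid 2D$ and yields
$$\sum_{n\geq 1}\frac{1}{[K_n:\mathbb{Q}]} \;=\; \left(1+\prod_{p\mid 2D}\frac{S_p^{(\ell)}}{S_p}\right)\prod_p S_p,$$
which already has the shape of \eqref{kummer-tdp-lastproduct} and \eqref{kummer-tdp-lastproduct-2}. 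What remains is the explicit evaluation of the local factors from the formulas for $\#A(p^k)$ recalled in the introduction.

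For any odd prime $p$ (and for $p=2$ in the untwisted case, since there $s-1=\nu_2(e)$), the identity $\#A(p^k)=p^{k-\min\{k,\nu_p(e)\}}\Phi(p^k)$ splits $S_p$ at $k=\nu_p(e)$ into two geometric series; summing and simplifying produces
$$S_p \;=\; 1+\frac{p^{\nu_p(e)+2}+p^{\nu_p(e)+1}-p^2}{p^{\nu_p(e)}(p-1)(p^2-1)},$$
which is precisely the local factor in \eqref{kummer-tdp-lastproduct} and \eqref{kummer-tdp-lastproduct-2}. In the twisted case at $p=2$ the formula $\#A(2^k)=2^{k-\min\{k,s-1\}}\Phi(2^{k+1})$ with $s=\nu_2(e)+2$ gives, via the same two-piece geometric sum, $S_2 = 2-1/(3\cdot 2^{\nu_2(e)})$, which after rewriting equals $1+(2^{\nu_2(e)+2}-2^{\nu_2(e)}-1)/(3\cdot 2^{\nu_2(e)})$, the distinguished $p=2$ factor in \eqref{kummer-tdp-lastproduct-2}.

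The final task is to evaluate $S_p^{(\ell)}/S_p$ at each prime dividing $2D$. For odd $p\mid D$ we have $\ell(p)=1$, so $S_p^{(\ell)}=S_p-1$ and a short calculation yields $S_p^{(\ell)}/S_p=(p^{\nu_p(e)+2}+p^{\nu_p(e)+1}-p^2)/(p^{\nu_p(e)+3}+p^{\nu_p(e)}-p^2)$, exactly the factor appearing in the products over odd $p$ in \eqref{kummer-tdp-lastproduct} and \eqref{kummer-tdp-lastproduct-2}. For $p=2$ I split according to whether $D$ is odd, $4\Vert D$, or $8\Vert D$, and within $8\Vert D$ according to whether $(\nu_2(e),\sgn a)=(1,-1)$; in each subcase the truncated geometric sum $S_2^{(\ell)}$ is computed directly and divided by $S_2$. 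In the untwisted case the ratio factors as the $p=2$ specialization of $(p^{\nu_p(e)+2}+p^{\nu_p(e)+1}-p^2)/(p^{\nu_p(e)+3}+p^{\nu_p(e)}-p^2)$ times the constant $c_0/(3\cdot 2^{\nu_2(e)}-2)$, which both extends the product in \eqref{kummer-tdp-lastproduct} to include $p=2$ and exposes the prefactor; in the twisted case the entire $p=2$ ratio is absorbed into the prefactor as $c_0/(3\cdot 2^{\nu_2(e)+2}-2)$. The only real difficulty is the bookkeeping in this last step: checking that each of the subcases at $p=2$ produces exactly the stated value of $c_0$ after the algebraic reorganization. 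No subcase is conceptually hard, but the calculations must be matched carefully with the case list defining $\ell(2)$ and $c_0$.
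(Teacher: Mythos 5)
Your proposal is correct and follows essentially the same route as the paper's own proof: specialize Theorem \ref{product-kummer-family} to $g(n)=1$, rewrite the result as $\bigl(1+\prod_{p\mid 2D}S_p^{(\ell)}/S_p\bigr)\prod_p S_p$, evaluate each local sum as a two-piece geometric series split at $k=\nu_p(e)$ using the formulas for $\#A(p^k)$, and finish with the case-by-case bookkeeping at $p=2$ (your $S_p$ and $S_p^{(\ell)}$ are exactly the paper's $1+B_p$ and $C_p$). The level of detail you leave to "direct computation" in the $p=2$ subcases matches what the paper itself does.
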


Let $c_{a}$ denote the constant given in \eqref{kummer-tdp-lastproduct} and \eqref{kummer-tdp-lastproduct-2}. 
We can write $c_a=q_a\cdot u$, where $q_a$ is a rational number depending on $a$, and $u$ is the \emph{universal constant}
\begin{equation}
\label{universal}
\sum_{n=1}^{\infty} \frac{1}{n \Phi(n)}=\prod_{p}\left( 1+\frac{p}{(p-1)(p^2-1)} \right)= 2.203856\cdots,
\end{equation}
where $\Phi(n)$ is the Euler totient function. 
Note that $n\Phi(n)$ is the ``generic/expected" degree
of the extension $\mathbb{Q}(\zeta_n, a^{1/n})$ over $\mathbb{Q}$; however, this may not be the case due to entanglement phenomena. Moreover,
observe that if $\nu_p(e)=0$ for all $p$, the expressions for products over all primes $p$  given in \eqref{kummer-tdp-lastproduct} and \eqref{kummer-tdp-lastproduct-2} 
reduce to \eqref{universal}. This is in accordance with \cite[Theorem 1.4]{AF} in which \eqref{universal} appears as the average constant while varying $a$. Thus, on average over $a$
the universal constant appears.
The product expressions of Proposition \ref{TDPK-formula} provide a convenient way of computing the numerical value of $c_{a}$ for a given value of $a$. For example $c_2=c_{-2}=2.258\cdots$.
%

The classical Artin conjecture and the Titchmarsh divisor problem for a Kummer family are instances of a more general problem that we now describe.   For an integer $a \neq 0, \pm 1$ and a prime $p\nmid a$,  the \emph{residual index} of $a$ mod $p$, denoted by $i_a(p)$ is the index of the subgroup $\langle a \rangle$ in the multiplicative group $(\mathbb{Z}/p\mathbb{Z})^\times$.
There is a vast amount of literature on the study of asymptotics of functions of $i_a(p)$ as $p$ varies over primes. In \cite[p. 377]{Papa}, the following problem is proposed.

\begin{problem}[Generalized Artin Problem]
\label{generalized}
Determine integers $a$ and arithmetic functions $f(n)$ for which the asymptotic formula $$\sum_{p\leq x} f(i_a(p)) \sim c_{f, a} \li(x),$$
as $x\rightarrow \infty$, hold, where 
\begin{equation}
\label{series}
c_{f, a}:= \sum_{n\geq 1} \frac{g(n)}{[K_n:\mathbb{Q}]}.
\end{equation}
Here $g(n)=\sum_{d\mid n} \mu(d) f(n/d)$ is the M\"{o}bius inverse of $f(n)$, where $\mu(n)$ is the M\"{o}bius function.
\end{problem}
Note that by setting $f(n)$ as the characteristic function of the set $S=\{1\}$, hence $g(n)=\mu(n)$, in Problem \ref{generalized}, we get the Artin conjecture, and $f(n)=d(n)$ (the divisor function), hence $g(n)=1$,  gives the Titchmarsh divisor problem for a Kummer family, this is true since $\tau_a(p)=d(i_a(p))$ (see \cite[Lemma 2.1]{felix-murty} for details). Also, a conjecture of Laxton from 1969 (see \cite{L} and 
\cite[p. 313]{stephens}) predicts that for $f(n)=1/n$, the generalized Artin problem determines the density of primes in the sequence given by the recurrence $w_{n+2}=(a+1) w_{n+1}-aw_n$, where $a>1$ is a fixed integer. Another instance of Problem \ref{generalized} appears in a conjecture of Bach, Lukes, Shallit, and Williams \cite{BLSW} in which the constant $c_{f, 2}$ for $f(n)=\log{n}$ appears in the main term of the asymptotic formula for $\log{P_2(x)}$, where $P_2(x)$ is the smallest \emph{$x$-pseudopower} of the base $2$.

A notable result on the Generalized Artin Problem, due to Felix and Murty \cite[Theorem 1.7]{felix-murty}, establishes, under the assumption of GRH, the asymptotic  
\begin{equation}
\label{FM}
\sum_{p\leq x} f(i_a(p))= c_{f, a} \li(x)+O_a\left( \frac{x}{(\log{x})^{2-\epsilon-\alpha}} \right),
\end{equation}
for $\epsilon>0$. Here $f(n)$ is an arithmetic function whose M\"obius inverse $g(n)$ satisfies
$$|g(n)| \ll d_k(n)^r (\log{n})^{\alpha}, $$
with $k, r \in \mathbb{N}$ and $0\leq \alpha <1$ all fixed, where $d_k(n)$ denotes the number of representations of $n$ as product of $k$ positive integers.
Observe that 
the identity \eqref{product-kummer} in Theorem \ref{product-kummer-family} conveniently furnishes a product formula in full generality
for the constant $c_{f, a}$ in \eqref{FM} when $f$ (equivalently $g$) is a multiplicative function. This product formula is valuable for studying the vanishing criteria for $c_{f, a}$ and their numerical evaluations for different $f$.

We now comment on the proof of Theorem \ref{product-kummer-family}.  Observe that corresponding to the Kummer family $\{K_n\}$, we can consider the inverse systems $((G(n))_{n\in \NN}, (i_{n_1,n_2})_{n_1\mid n_2})$ and $((A(n))_{n\in \NN},(j_{n_1,n_2})_{n_1\mid n_2})$ ordered by divisibility relation on $\mathbb{N}$, where $G(n)$ and $A(n)$ are as defined before and $i_{n_1,n_2}:G(n_2)\rightarrow G(n_1)$ and $j_{n_1, n_2}: A(n_2) \rightarrow A(n_1)$, for $n_1\mid n_2$, are restriction maps. By taking the inverse limits on both sides of \eqref{embed}
 we have the injective continuous homomorphism
$$r:G=\varprojlim G(n)\to A=\varprojlim A(n)$$
of profinite groups, where $G=\Gal(K_{\infty}/\QQ)$ and $A=\Aut_{\QQ^{\times}\cap R_{\infty}}(R_{\infty})$ with $K_{\infty}=\bigcup_{n\geq1}K_n$ and $R_{\infty}=\bigcup_{n\geq 1}R_n$. 
As profinite groups, both $G$ and $A$ are endowed with topologies that make $G$ and $A$ into compact topological spaces, and thus, they can be equipped by Haar measures. 
We will show that Theorem \ref{product-kummer-family} is a corollary of the following theorem attached to a general setting of profinite groups $G$ and $A$.
\begin{theorem}
\label{main1}
Let $((G(n))_{n\in \NN}, (i_{n_1,n_2})_{n_1\mid n_2})$ and $((A(n))_{n\in \NN},(j_{n_1,n_2})_{n_1\mid n_2})$ be surjective inverse systems of finite groups ordered by divisibility relation on $\mathbb{N}$.
Moreover, for $n\geq 1$, assume that there are injective maps $r_n:G(n)\to A(n)$ compatible with surjective transition maps $i_{n_1,n_2}$ and $j_{n_1,n_2}$, i.e., for $n_1\mid n_2$, the diagram 
\begin{equation*}
    \begin{tikzcd}
    G(n_2)\arrow[r,"r_{n_2}"]\arrow[d,"i_{n_1,n_2}"] & A(n_2)\arrow[d,"j_{n_1,n_2}"]\\
    G(n_1)\arrow[r,"r_{n_1}"] & A(n_1)
    \end{tikzcd}
\end{equation*}
commutes. Let $r: G=\varprojlim G(n)\to A=\varprojlim A(n)$ be the resulting injective continuous homomorphism of profinite groups. 
Let $\mu_m$ be the multiplicative group of  $m$-th roots of unity for a fixed $m$. 
Suppose there exists an exact sequence 
\begin{equation}
\label{ev1}
    1\to G\stackrel{r}{\longrightarrow} A\stackrel{\chi}{\longrightarrow} \mu_m\to 1,
\end{equation}
where $\chi$ is a continuous homomorphism.
Let $g$ be an arithmetic function such that 
\begin{equation*}
    \sum_{n\geq1}\frac{\lvert g(n)\rvert}{\#G(n)}<\infty.
\end{equation*}
Consider the natural projections 
    $\pi_{A,n}:A\to A(n)$
and let 
\begin{equation}
\label{g-tilde}
    \Tilde{g}=\sum_{n\geq1}g(n)1_{\ker\pi_{A,n}},
\end{equation}
where $1_{\ker\pi_{A, n}}$ denotes the characteristic function of $\ker\pi_{A, n}$.
Let $\nu_A$ be the normalized Haar measure attached to $A$.  
Then, $\Tilde{g}\in L^1(\nu_A)$ (the space of $\nu_A$-integrable functions)  and
\begin{equation*}
    \sum_{n\geq1}\frac{g(n)}{\#G(n)}=\sum_{i=0}^{m-1}\int_A \Tilde{g}\chi^id\nu_A.
\end{equation*}
\end{theorem}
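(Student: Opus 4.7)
The plan is to derive the identity from the orthogonality of the characters of $\mu_m$, applied to the exact sequence \eqref{ev1}, together with the standard relationship between the Haar measure on $A$ and that on the finite-index subgroup $r(G)$. First I would record two elementary measure-theoretic facts. Because every transition map $j_{n_1,n_2}$ is surjective, the natural projection $\pi_{A,n}\colon A\to A(n)$ is surjective for every $n$, so $\ker\pi_{A,n}$ is a clopen subgroup of $A$ of index $\#A(n)$ and $\nu_A(\ker\pi_{A,n})=1/\#A(n)$; analogously, $\pi_{G,n}\colon G\to G(n)$ is surjective with $\nu_G(\ker\pi_{G,n})=1/\#G(n)$, where $\nu_G$ denotes the normalized Haar measure on $G$. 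Injectivity of $r_n$ gives $\#A(n)\geq \#G(n)$, so by monotone convergence
\begin{equation*}
\int_A|\tilde g|\,d\nu_A \;\leq\; \sum_{n\geq1}\frac{|g(n)|}{\#A(n)} \;\leq\; \sum_{n\geq1}\frac{|g(n)|}{\#G(n)} \;<\;\infty,
\end{equation*}
so that $\tilde g\in L^1(\nu_A)$ and the defining series in \eqref{g-tilde} converges absolutely in $L^1$.

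Identifying $G$ with its image $r(G)$, the exact sequence \eqref{ev1} gives $r(G)=\ker\chi$, and the standard geometric-series identity then yields, for every $a\in A$,
\begin{equation*}
\sum_{i=0}^{m-1}\chi^i(a) \;=\; m\cdot 1_{r(G)}(a).
\end{equation*}
Since $|\chi^i|=1$, I can multiply by $\tilde g$ and integrate term-by-term against $\nu_A$ to obtain
\begin{equation*}
\sum_{i=0}^{m-1}\int_A \tilde g\,\chi^i\,d\nu_A \;=\; m\int_{r(G)}\tilde g\,d\nu_A.
\end{equation*}
Since $r(G)$ is a closed subgroup of $A$ of index $m$, by uniqueness of Haar measure on a compact group we have $\nu_A|_{r(G)} = (1/m)\,r_{\ast}\nu_G$. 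Moreover, the commutativity of the diagram in the hypothesis forces $r^{-1}(\ker\pi_{A,n})=\ker\pi_{G,n}$ for every $n$, hence $\tilde g\circ r = \sum_{n\geq 1} g(n)\,1_{\ker\pi_{G,n}}$ as an element of $L^1(\nu_G)$. Combining these observations,
\begin{equation*}
m\int_{r(G)}\tilde g\,d\nu_A \;=\; \int_G \tilde g\circ r\,d\nu_G \;=\; \sum_{n\geq 1} g(n)\,\nu_G(\ker\pi_{G,n}) \;=\; \sum_{n\geq 1}\frac{g(n)}{\#G(n)},
\end{equation*}
where the second equality is the $L^1$ term-by-term integration justified by the bound in the first paragraph.

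The heart of the argument is the orthogonality identity above, which \emph{linearises} the condition $a\in r(G)$ into a sum of continuous characters of $A$; in effect, this is a profinite analogue of the use of $\frac{1}{m}\sum_i\chi^i$ as the indicator of a subgroup in Theorem \ref{LMS}. The remaining ingredients—surjectivity of the natural projections from the surjectivity of the transitions, the ratio of Haar measures on a finite-index closed subgroup, and the dominated/monotone convergence justifying the interchange of sum and integral—are all standard for compact profinite groups, so I anticipate no substantive obstacle beyond careful bookkeeping of the measure-theoretic identifications and the verification that $r^{-1}(\ker\pi_{A,n})=\ker\pi_{G,n}$, which is an immediate consequence of the injectivity of $r_n$ and the commutativity of the given diagram.
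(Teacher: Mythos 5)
Your proof is correct and follows essentially the same route as the paper's: both reduce the sum to an integral of $\Tilde{g}$ against $1_{\ker\chi}$ (with the factor $\nu_A(\ker\chi)=1/m$) via the identification $r(\ker\pi_{G,n})=\ker\pi_{A,n}\cap\ker\chi$ (your equivalent statement $r^{-1}(\ker\pi_{A,n})=\ker\pi_{G,n}$), the Haar-measure comparison between $G$ and the finite-index subgroup $r(G)\subset A$, the orthogonality relation $\sum_{i=0}^{m-1}\chi^i=m\cdot 1_{\ker\chi}$, and dominated/monotone convergence for the interchange of sum and integral. The only differences are cosmetic: you run the argument from the integral side back to the sum and phrase the measure comparison as the pushforward identity $\nu_A|_{r(G)}=(1/m)\,r_{*}\nu_G$, whereas the paper goes from the sum to the integral using index ratios.
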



Observe that Theorem \ref{main1} is quite general and can be applied in the evaluation of sums of the form $\sum_{n\geq 1} g(n)/\#G(n)$ for any family $\{G(n)\}$ of finite groups satisfying the assumptions of the theorem. 
The family of Galois groups of a Kummer family is an instance of such families. Another example is the family of Galois groups of the division fields attached to a \emph{Serre elliptic curve} $E$ (see Section \ref{Serre section} for the definition). Following \cite[Section 8]{lenstra-stevenhagen-moree}, in Section \ref{Serre section}, we show that the family of Galois groups of the division fields $\{\mathbb{Q}(E[n])\}$ attached to a Serre curve $E$ satisfies the conditions of Theorem \ref{main1} and we deduce the following proposition.
\begin{proposition}
\label{prop-Serre}
Let $\mathbb{Q}(E[n])$ denote the $n$-division field of a Serre elliptic curve defined over $\mathbb{Q}$.
Let $\Delta$ be the discriminant of any Weierstrass model for $E$.
Let $D$ be the discriminant of the quadratic field $K=\mathbb{Q}({\Delta}^{1/2})$. Let $g(n)$ be a multiplicative arithmetic function such that 
\begin{equation*}
    \sum_{n\geq1}^{\infty}\frac{\lvert g(n)\rvert}{[\mathbb{Q}(E[n]) : \mathbb{Q}]}<\infty.
\end{equation*}
Then,
\begin{equation}
\label{serre-sum}
  \sum_{n=1}^{\infty}\frac{g(n)}{[\mathbb{Q}(E[n]) : \mathbb{Q}]}  =\prod_p\sum_{k\geq 0}\frac{g(p^k)}{\#\Aut(E[p^k])}+
    \prod_{p}\sum_{k\geq\ell(p)}\frac{g(p^k)}{\#\Aut(E[p^k])},
\end{equation}
where 
$$\ell(p)=\left\{\begin{array}{ll}
0&\text{if~} p~ \text{is~ odd~and~}p\nmid D,\\
1&\text{if~} p~ \text{is~ odd~and~}p\mid D,\\
1&\text{if}~ p=2~ \text{and}~ D ~\text{is odd},\\
2&\text{if}~ p=2~ \text{and}~ 4\Vert D,\\
3&\text{if}~ p=2~ \text{and}~ 8\Vert D,\\ 
\end{array}
\right.
$$
and
\begin{equation*}
\#\Aut(E[p^k])=\begin{cases}
p^{4k-3}(p^2-1)(p-1)&\text{if }k\geq 1,\\
1&\text{if }k=0.
\end{cases}
\end{equation*}


\end{proposition}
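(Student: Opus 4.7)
The plan is to derive Proposition \ref{prop-Serre} as a direct application of Theorem \ref{main1} to the family $G(n)=\Gal(\mathbb{Q}(E[n])/\mathbb{Q})$ together with $A(n)=\Aut(E[n])\cong \GL_2(\mathbb{Z}/n\mathbb{Z})$, in which $r_n=\rho_{E,n}$ is the mod-$n$ Galois representation. These are surjective inverse systems compatible with divisibility, so passing to the limit gives an injection $r:G\hookrightarrow A=\GL_2(\widehat{\ZZ})$. The hypothesis that $E$ is a Serre curve means precisely that $r(G)$ has index $2$ in $A$, and following Serre (as recalled in \cite[Section 8]{lenstra-stevenhagen-moree}) this index-$2$ subgroup is the kernel of an explicit quadratic character $\chi:A\to\{\pm 1\}$, producing the exact sequence
$$1\to G\stackrel{r}{\longrightarrow} A\stackrel{\chi}{\longrightarrow}\{\pm 1\}\to 1$$
required for Theorem \ref{main1} with $m=2$.

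The character $\chi$ is the product of two contributions coming from the double description of $\QQ(\sqrt{\Delta})$: as a subfield of $\QQ(E[2])$ via the quotient $\GL_2(\mathbb{F}_2)\cong S_3\to\{\pm1\}$ (the sign character $\epsilon$), and as a subfield of $\QQ(\zeta_{|D|})$ by Kronecker--Weber (the Kronecker character $(D/\cdot)$ pulled back along the determinant). Since $A=\prod_p \GL_2(\ZZ_p)$ and the Kronecker character factors over primes via the CRT, we may write $\chi=\prod_p \chi_p$ with each $\chi_p:\GL_2(\ZZ_p)\to\{\pm 1\}$ continuous. For odd $p$ the local conductor of $\chi_p$ is $1$ when $p\nmid D$ and $p$ when $p\mid D$, giving $\ell(p)=0$ or $1$ respectively. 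At $p=2$ the contribution from $\epsilon$ has conductor $2$, while the $2$-part of $(D/\cdot)$ has conductor $|D|_2\in\{1,4,8\}$; crucially, since $\det$ is trivial on $\GL_2(\mathbb{F}_2)$, these two pieces are independent and do not cancel, so the conductor of $\chi_2$ equals $\mathrm{lcm}(2,|D|_2)$. This yields $\ell(2)=1,2,3$ according as $D$ is odd, $4\Vert D$, or $8\Vert D$, matching the statement.

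Next, I invoke Theorem \ref{main1} with $m=2$ to obtain
$$\sum_{n\geq 1}\frac{g(n)}{\#G(n)}=\int_A \widetilde{g}\,d\nu_A+\int_A \widetilde{g}\,\chi\,d\nu_A.$$
The first integral unfolds as $\sum_{n\geq 1}g(n)\nu_A(\ker\pi_{A,n})=\sum_{n\geq 1}g(n)/\#A(n)$, and by multiplicativity of $g$ together with $\#A(n)=\prod_{p^k\Vert n}\#A(p^k)$ it factors as $\prod_p\sum_{k\geq 0}g(p^k)/\#A(p^k)$. For the second integral, $\ker\pi_{A,n}=\prod_p\ker\pi_{A_p,p^{v_p(n)}}$, and since $\chi$ decomposes as $\prod_p\chi_p$, one has $\int_{\ker\pi_{A,n}}\chi\,d\nu_A=\prod_p\int_{\ker\pi_{A_p,p^{v_p(n)}}}\chi_p\,d\nu_{A_p}$; each local factor is $1/\#A(p^{v_p(n)})$ if $v_p(n)\geq\ell(p)$ and $0$ otherwise. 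Hence the second integral equals $\prod_p\sum_{k\geq \ell(p)}g(p^k)/\#A(p^k)$, which gives the desired identity. The explicit formula $\#\Aut(E[p^k])=\#\GL_2(\ZZ/p^k\ZZ)=p^{4k-3}(p-1)(p^2-1)$ for $k\geq 1$ is standard.

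The principal technical point is the verification at $p=2$ that the $\epsilon$ and $(D/\cdot)$ factors genuinely compose to produce a character of conductor $\mathrm{lcm}(2,|D|_2)$ rather than collapsing at a lower level; this rests on the triviality of $\det$ on $\GL_2(\mathbb{F}_2)$, which decouples the two contributions. Everything else is bookkeeping: compatibility of the local decomposition of $A$ with the factorization of $\chi$, and the Fubini-type factorization of the two integrals along primes, which are formal consequences of $\chi_p$ being trivial for almost all $p$ and $\widetilde{g}$ lying in $L^1(\nu_A)$ by the absolute convergence hypothesis.
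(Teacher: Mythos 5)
Your proposal is correct and takes essentially the same route as the paper: the Serre-curve exact sequence $1\to G\to \GL_2(\widehat{\ZZ})\xrightarrow{\chi}\mu_2\to 1$ with $\chi = \psi\cdot\chi_D$, the local factorization $\chi=\prod_p\chi_p$ with exactly the conductor values $\ell(p)$ of the paper's Proposition \ref{character-serre}, and then Theorem \ref{main1} with $m=2$ unfolded prime-by-prime. The only cosmetic differences are that the paper packages your Fubini-type factorization of the two integrals as Corollary \ref{cor-after-main1} (citing the computation already done in the proof of Theorem \ref{product-kummer-family}), and your justification of non-cancellation at $p=2$ via the triviality of $\det$ on $\GL_2(\mathbb{F}_2)$ makes explicit a point the paper's Proposition \ref{character-serre} leaves implicit.
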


Note that for a Serre curve, $D\neq 1$ (see \cite[p. 510]{lenstra-stevenhagen-moree}) and thus $K$ is a quadratic field. Also, observe that the above proposition for $g(n)=1$ reduces to the product expression of the Titchmarsh divisor problem for the family of division fields attached to a Serre curve $E$. We note that the product expression for this constant and two other constants corresponding to different $g(n)$'s for such families are given in \cite[Theorem 5]{cojocaru:tdp} by determining the value of $[\mathbb{Q}(E[n]):\mathbb{Q}]$ for a Serre curve $E$ (see \cite[Proposition 17 (iv)]{cojocaru:tdp}) and employing \cite[Lemma 3.12]{kowalski}. It is worth mentioning that a similar approach in finding the expression \eqref{kummer-tdp-lastproduct} using the exact formulas for $[K_n: \mathbb{Q}]$ as given in \cite[Proposition 4.1]{wagstaff} will result in the tedious case by case computations that does not appear to be straightforward.  Especially when $a<0$, this approach seems to be intractable.  The method of \cite{lenstra-stevenhagen-moree} as described above provides an elegant approach to establishing identities similar to \eqref{kummer-tdp-lastproduct} and \eqref{kummer-tdp-lastproduct-2}.

The structure of the paper is as follows. We describe our adaptation of the character sums method of \cite{lenstra-stevenhagen-moree} in Sections 2 and 3 and prove Proposition \ref{character} that plays a crucial role in our explicit computation of the constants in the Kummer case.  Section 4 is dedicated to a proof of Theorem \ref{main1}. The proofs of Theorem \ref{product-kummer-family} and its consequence, Proposition  \ref{TDPK-formula}, are given respectively in Sections 5 and 6. Finally, a brief discussion on Serre curves and the proof of Proposition \ref{prop-Serre} are provided in Section 7. 

\begin{notation}
 The following notations are used throughout the paper. The letter $p$ denotes a prime number,  $k$ denotes a non-negative integer,  the letter $n$ denotes a positive integer,
the multiplicity of the prime $p$ in the prime factorization of $n$ is denoted by $\nu_p(n)$,   the cardinality of a finite set $S$ is denoted by $\#S$, $1_S$ is the characteristic function of a set $S$, $\overline{\mathbb{Q}}$ is an algebraic closure of $\mathbb{Q}$, $\zeta_n$ denotes a primitive root of unity in $\overline{\mathbb{Q}}$, and $\Phi(n)$ is the Euler totient function. In Sections \ref{section:character}, \ref{Sec-3}, \ref{S1}, and  \ref{Section 5}, $a=\pm a_0^e$, with $\sgn(a_0)=\sgn(a)$, is a non-zero integer other than $\pm 1$,  the collection $\{K_n=\mathbb{Q}(\zeta_n, a^{1/n}) \}_{n\in \mathbb{N}}$ is the family of Kummer fields and $K=\mathbb{Q}(a_0^{1/2})$ is the entanglement field attached to this family,  $D$ is the discriminant of $K$, the Galois group of $K_n$ over $\mathbb{Q}$ is denoted by $G(n)$, the inverse limit of the directed family $\{G(n)\}$ is denoted by $G$, $\mu_\infty\subseteq\overline{\mathbb{Q}}$ denotes the group of all roots of unity, and $\mathbb{Q}_{ab}=\mathbb{Q}(\mu_\infty)$ is the maximal abelian extension of $\QQ$. The group of $n$-radicals   of the integer $a=\pm a_0^e$ is denoted by $R_n$ and $R_{\infty}=\bigcup_{n\geq 1}R_n$.
The group of automorphisms of $R_n$ (respectively $R_\infty$) that fix $\mathbb{Q}^\times$ is denoted by $A(n)$ (respectively $A$). The inverse limit of the system 
$\{A(p^k)\}_{k\geq 1}$ is denoted by $A_p$. The map $\pi_{A, n}$ (respectively $\pi_{G, n}$ and $\varphi_{p^k}$)  is the projection map from $A$ (respectively $G$ and $A_p$) to $A(n)$ (respectively $G(n)$ and  $A(p^k)$). The profinite completion of $\mathbb{Z}$ is denoted by $\widehat{\mathbb{Z}}$ and $\mathbb{Z}_p$ is the ring of $p$-adic integers. The normalized Haar measures on $G$, $A$, and $A_p$ are denoted respectively by $\nu_G$, $\nu_A$, and $\nu_{A_p}$.  The space of $\nu$-integrable functions is denoted by $L^1(\nu)$.
In Section \ref{Section 3}, $G(n)$, $A(n)$, $A(p^k)$, $G$, $A$, $A_p$, $\pi_{A, n}$, $\varphi_{p^k}$, $\nu_G$, $\nu_A$, and $\nu_{A_p}$ are used in the general setting of profinite groups.
Finally, in Section \ref{Serre section},  $E[n]$ denotes the group of $n$-division points over $\overline{\mathbb{Q}}$ of an elliptic curve $E$ defined over $\mathbb{Q}$ given by a Weierstrass equation with discriminant $\Delta$, and $K=\mathbb{Q}(\Delta^{1/2})$ of discriminant $D$ is the entanglement field attached to the family of division fields of a Serre elliptic curve. We denote the group of automorphisms of $E[n]$ by $\Aut(E[n])$ and the multiplicative group of $2\times 2$ matrices with entries in $\ZZ/n\ZZ$ by $\GL_2(\ZZ/n\ZZ)$.  \end{notation}


\section{The associated character to a Kummer family}\label{section:character}

Recall that for an integer $a \neq0,\pm1$, we set $a=\pm a_0^e$, where $\sgn(a)=\sgn(a_0)$ and $e$ is the largest such integer. {
We have the group embedding $\mathbb{Z} \simeq \langle a_0 \rangle \subset \overline{\mathbb{Q}}^\times$ defined by sending $1$ to $a_0$. Since $ \overline{\mathbb{Q}}^\times$ is a divisible group, we can extend this embedding to an embedding $\mathbb{Q} \rightarrow \overline{\mathbb{Q}}^\times$. For $a_0$, we fix such embedding $q\mapsto a_0^q$ and write $a_0^\mathbb{Q}$ for the image of this embedding in $\overline{\mathbb{Q}}^\times$. We denote the image of $1/2$ in this embedding by $a_0^{1/2}$. 
}

We next
define a quadratic character which describes the entanglements within a given Kummer family $\{K_n\}$. Let $\mu_{\infty}=\bigcup_{n\geq1}\mu_n(\overline{\mathbb{Q}})$ be the group of all roots of unity in $\overline{\mathbb{Q}}$. Then, $\mu_{\infty}$ is contained in $K_{\infty}=\bigcup_{n\geq1}K_n$. In addition, the infinite extension $K_{\infty}/\QQ$ is the compositum of $\QQ(a_0^{\QQ})$ and $\QQ_{ab}$ (the maximal abelian extension of $\QQ$), where
\begin{equation}
\label{intersection}
\QQ(a_0^{\QQ})\cap\QQ_{ab}=\QQ(a_0^{1/2})
\end{equation}
 (see \cite{lenstra-stevenhagen-moree}*{Lemma 2.5} and the discussion on the last paragraph of \cite{lenstra-stevenhagen-moree}*{p. 494}). The equality \eqref{intersection} demonstrates the entanglement of two fields $\QQ(a_0^{\QQ})$ and $\QQ_{ab}$ since their intersection is a non-trivial extension of $\QQ$ (i.e., the field $\QQ(a_0^{1/2}))$. This justifies calling $K=\QQ(a_0^{1/2})$ the entanglement field.
In \cite{lenstra-stevenhagen-moree}*{p. 494} it is proved that
\begin{equation}
\label{semidirect}
    A=\Aut_{\QQ^{\times}\cap R_{\infty}}(R_{\infty})\cong\Hom(a_0^{\QQ}/a_0^{\ZZ},\mu_{\infty})\rtimes \Aut(\mu_{\infty}),
\end{equation}
where 
$a_0^{\ZZ}=\{a_0^b;\;b\in\ZZ\}$,
and for $(\phi_1, \sigma_1), (\phi_2, \sigma_2)\in A$ we have
$$(\phi_1, \sigma_1)(\phi_2, \sigma_2)= (\phi_1\cdot (\sigma_1 \circ \phi_2), \sigma_1\circ \sigma_2).$$

Note that $G=\Gal(K_{\infty}/\QQ)$ can be embedded in $A$. Thus, if $(\phi,\sigma)\in \Hom(a_0^{\QQ}/a_0^{\ZZ},\mu_{\infty})\rtimes \Aut(\mu_{\infty})\cong A$ is an element of $G$, then, by \eqref{intersection},  the action of $\phi$ and $\sigma$ on {$a_0^{1/2}$} must be the same. One can show that $(\phi,\sigma)\in A$ is in $G$ if and only if $\phi$ and $\sigma$ act in a compatible way on $a_0^{1/2}$, i.e.,
\begin{equation}
\label{compatibility}
    \phi(a_0^{1/2})=\frac{\sigma(a_0^{1/2})}{a_0^{1/2}}\in \mu_2
\end{equation}
(see \cite{lenstra-stevenhagen-moree}*{p. 494}). (For simplicity, we used $\phi(a_0^{1/2})$ instead of $\phi(a_0^{1/2}a_0^{\mathbb{Z}})$.) We elaborate on \eqref{compatibility} by considering two distinct quadratic characters $\psi_K$ and $\chi_D$ on $A$ which are related to the entanglement field $K=\QQ(a_0^{1/2})$ of discriminant $D$. The quadratic character $\psi_K:A\to\mu_2$ corresponds to the action of $\phi$-component of $(\phi,\sigma)\in A$ on $a_0^{1/2}$, i.e.,
\begin{equation*}
    \psi_K(\phi,\sigma)=\phi(a_0^{1/2})\in\mu_2.
\end{equation*}
This is a \emph{non-cyclotomic character}, i.e., $\psi_K$ does not factor via the natural map $A\to\Aut(\mu_{\infty})$ (see \cite{lenstra-stevenhagen-moree}*{p. 495}). 
The other quadratic character,
\begin{equation*}
    \chi_D:A\to\Aut(\mu_{\infty})\cong\widehat{\ZZ}^{\times}\to\mu_2,
\end{equation*}
corresponds to the action of the cyclotomic component $\Aut(\mu_{\infty})$ of $A$ on $K=\QQ(a_0^{1/2})$ of discriminant $D$, i.e.,
\begin{equation*}
    \chi_D(\phi,\sigma)=\frac{\sigma(a_0^{1/2})}{a_0^{1/2}}\in \mu_2.
\end{equation*}
Hence, by \cite{cox}*{Proposition 5.16 and Corollary 5.17}, $\chi_D$ factors via the lift of the Kronecker symbol $\left(\frac{ D}{.}\right)$ to $\Aut(\mu_{\infty})\cong\widehat{\ZZ}^{\times}$.

The characters $\chi_D$ and $\psi_K$ are not the same on $A$ since one is cyclotomic, and the other is not. Moreover, by \eqref{compatibility}, an element $x\in A$ is in $G$ if and only if $\psi_K(x)=\chi_D(x)$. Thus, if $r: G\rightarrow A$ is the natural embedding defined in \cite[p. 493]{lenstra-stevenhagen-moree}, the image of $r$ is the kernel of the non-trivial quadratic character $\chi=\psi_K\cdot\chi_D: A\to\mu_2$. In other words, the sequence
\begin{equation*}
    1\longrightarrow G\stackrel{r}{\longrightarrow} A\xrightarrow{\chi=\psi_K\cdot\chi_D}\mu_2\longrightarrow 1
\end{equation*}
is an exact sequence (see \cite{lenstra-stevenhagen-moree}*{Theorem 2.9}). 

Let $A(p^k)=\Aut_{\mathbb{Q}^\times \cap R_{p^k} }(R_{p^k})$ and $A_p=\varprojlim A(p^k)$. Since an element of $A$ can be determined by its action on prime power radicals, we have that $A\cong \prod_{p} A_p$  (see \cite[formula (2.10), p. 495]{lenstra-stevenhagen-moree} and \cite[p. 20]{MS}).  
The character $\chi_D$ is the lift of the Kronecker symbol $\left(\frac{D}{.}\right)$ to $A$ via the maps
\begin{equation*}
    A\cong\left(\prod_pA_p\right)\stackrel{\proj}{\longrightarrow}\Aut(\mu_{\infty})\left(\cong\prod_p\ZZ^{\times}_p\right)\stackrel{\text{proj}}{\longrightarrow}(\ZZ/|D|\ZZ)^{\times},
\end{equation*}
where the first projection comes via \eqref{semidirect}.
Since $D$ is a fundamental discriminant, $\chi_D=\prod_{p\mid D}\chi_{D,p}$, where $\chi_{D,p}$ is the lift of the Legendre symbol modulo $p$ to $A_p$ for odd $p$, and $\chi_{D,2}$ is the lift of one of the Dirichlet characters mod $8$ to $A_2$ (see \cite{davenport}*{Chapter 5}). More precisely, if $D$ is odd, then $\chi_{D,2}=1$; if $4~\Vert~D$, then $\chi_{D,2}$ is the lift to $A_2$ of  $\left(\frac{-4}{.}\right)$, the unique Dirichlet character mod $8$ of conductor $4$; and if $8~\Vert~D$, then $\chi_{D,2}$ is the lift to $A_2$ of $\left(\frac{\pm8}{.}\right)$, one of the two Dirichlet characters mod $8$ of conductor $8$. For the case $8~\Vert~D=\pm 2^a\prod_{i=1}^kp_i$, if $D>0$ and the number of $1\leq i\leq k$ with $p_i\equiv3\;(\modd\;4)$ is even, or $D<0$ and the number of $1\leq i\leq k$ with $p_i\equiv3\;(\modd\;4)$ is odd, then $\chi_{D,2}$ is the lift to $A_2$ of $\left(\frac{8}{.}\right)$. Otherwise, $\chi_{D,2}$ is the lift to $A_2$ of $\left(\frac{-8}{.}\right)$. 

Next, we show that $\chi$ can be written as a product of local characters $\chi_p: A_p\to\mu_2$. Note that  $\psi_K$ factors via $A_2$. Let $\psi_{K,2}:A_2\to\mu_2$ be the corresponding homomorphism obtained from factorization of $\psi_K$ via $A_2$. 
For odd primes $p\nmid D$, set $\chi_p=1$. Let $\chi_p=\chi_{D,p}$ for odd primes $p\mid D$ and for prime $2$ let $\chi_2=\chi_{D,2}\cdot\psi_{K,2}$. Therefore, by the above construction of $\chi$, we have the decomposition $\chi=\prod_p\chi_p$.

\section{The local characters $\chi_p$}

\label{Sec-3}

In this section, we find the smallest values of $k$, as a function of $p$ and $a$, for which the local character $\chi_p$ factors via $A(p^k)$. In other words, we will determine the values of $k$ for which $\chi_p$ is trivial on $\ker\varphi_{p^{k}}$ and it is nontrivial on $\ker\varphi_{p^{k-1}}$, where $\varphi_{p^i}$ is the projection map from $A_p$ to $A(p^i)$.    The values are recorded in the statements of Theorem \ref{product-kummer-family} and Proposition \ref{character}. We start 
by giving a concrete description of the groups $A(p^k)$, for positive integers $k$, as  subgroups of matrices 
$\begin{psmallmatrix}1 & 0\\b & d\end{psmallmatrix}$, where $b\in \mathbb{Z}/p^k\mathbb{Z}$ and $d\in \left(\mathbb{Z}/p^k\mathbb{Z}\right)^\times$. We achieve this by choosing a certain compatible system of generators for the groups $R_{p^k}$, where $k\geq 1$.

\begin{proposition}
\label{new-prop}
(i) If $e$ is even and $a<0$, by choosing a suitable set of generators for $R_{2^k}$,  we have that 
\begin{equation*}
A(2^k)\cong \left\{\left(\begin{array}{cc} 1&0\\b&d \end{array} \right);~ b\in \mathbb{Z}/2^k\mathbb{Z}, d\in \left(\mathbb{Z}/2^k\mathbb{Z}\right)^\times, {\rm and}~2b+1\equiv d~({\rm mod}~2^{\min\{k,\nu_2(e)+1\}})
\right\}.
\end{equation*}
(ii) If $p$ is odd, or $p=2$ and $e$ is odd, or $p=2$ and $a>0$, by choosing a suitable set of generators for $R_{p^k}$,  we have that 
\begin{equation*}
A(p^k)\cong \left\{\left(\begin{array}{cc} 1&0\\b&d \end{array} \right);~ b\in \mathbb{Z}/p^k\mathbb{Z}, d\in \left(\mathbb{Z}/p^k\mathbb{Z}\right)^\times, {\rm and}~b+1\equiv d~({\rm mod}~p^{\min\{k,\nu_p(e)\}})
\right\}.
\end{equation*}
(iii) Let $\Phi(n)$ be the Euler totient function and $s$ be as defined in \eqref{s-def}. For odd $p$, $$\#A(p^k)= p^{k-\min\{k, \nu_p(e)\}}\Phi(p^k),$$ and for $p=2$,
$$\#A(2^k)=\begin{cases}
2^{k-\min\{k, s-1\}}\Phi(2^k)&\text{if}~e~\text{is~odd~or}~a>0,\\
2^{k-\min\{k, s-1\}}\Phi(2^{k+1})&\text{if}~e~\text{is~even~and}~a<0.
\end{cases}$$

\end{proposition}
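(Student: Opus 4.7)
The plan is to realize $A(p^k)$ as an explicit matrix group by choosing compatible generators of $R_{p^k}$, translate the requirement that $\sigma\in A(p^k)$ fix $\mathbb{Q}^\times\cap R_{p^k}$ into a single congruence between the matrix entries, and then read off the count.

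\emph{Generators and matrix form.} In case (ii), I would set $\alpha_{p^k}:=\zeta_{p^k}\cdot a_0^{e/p^k}$, where $a_0^{e/p^k}$ is the image of $e/p^k$ under the fixed embedding $q\mapsto a_0^q$ from Section~\ref{section:character}. A direct check gives $\alpha_{p^k}^{p^k}=a$, and since any $\beta\in R_{p^k}$ has $\beta^{p^k}=a^n$ for some integer $n$, the quotient $\beta/\alpha_{p^k}^n$ lies in $\mu_{p^k}$, so $R_{p^k}=\langle\zeta_{p^k},\alpha_{p^k}\rangle$. In the twisted case (i), no product of a $2^k$-th root of unity and a real power of $a_0$ has $2^k$-th power equal to the negative number $a$, so I would instead take $\alpha_{2^k}:=\zeta_{2^{k+1}}\cdot a_0^{e/2^k}$; the identity $\zeta_{2^{k+1}}^{2^k}=-1$ gives $\alpha_{2^k}^{2^k}=-a_0^e=a$, and although $\zeta_{2^{k+1}}\notin R_{2^k}$, the product $\alpha_{2^k}$ does lie in $R_{2^k}$ and together with $\zeta_{2^k}$ generates it. Any $\sigma\in A(p^k)$ is then determined by $\sigma(\zeta_{p^k})=\zeta_{p^k}^d$ with $d\in(\mathbb{Z}/p^k\mathbb{Z})^{\times}$ and $\sigma(\alpha_{p^k})=\zeta_{p^k}^b\alpha_{p^k}$ with $b\in\mathbb{Z}/p^k\mathbb{Z}$, since $\sigma(\alpha_{p^k})^{p^k}=a$; this injectively attaches the matrix $\begin{psmallmatrix}1&0\\ b&d\end{psmallmatrix}$ to $\sigma$.

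\emph{Fixing condition and counting.} A short calculation identifies the positive generator of $\mathbb{Q}^\times\cap R_{p^k}$ as $\gamma:=a_0^{e/p^{t}}$ with $t:=\min\{k,\nu_p(e)\}$ in case (ii), and as $\gamma:=a_0^{e/2^{t-1}}$ with $t:=\min\{k,\nu_2(e)+1\}$ in case (i); the sign $-1$, which lies in $R_{2^k}$ in the $p=2$ settings, is automatically fixed. One writes $\gamma=\zeta_{p^k}^{-p^{k-t}}\alpha_{p^k}^{p^{k-t}}$ in (ii) and $\gamma=\zeta_{2^k}^{-2^{k-t}}\alpha_{2^k}^{2^{k-t+1}}$ in (i)---the doubled exponent arising from the absorption of $\zeta_{2^{k+1}}$ into a $2^k$-th root of unity---and imposing $\sigma(\gamma)=\gamma$ reduces to $b+1\equiv d\pmod{p^{t}}$ and $2b+1\equiv d\pmod{2^{t}}$ respectively, giving the matrix descriptions in (i)--(ii). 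For the counts in (iii), in case (ii) each of the $\Phi(p^k)$ choices of $d$ admits $p^{k-t}$ lifts of $b$, producing $\#A(p^k)=p^{k-\min\{k,\nu_p(e)\}}\Phi(p^k)$; in case (i), since $d-1$ is even, $2b\equiv d-1\pmod{2^t}$ is equivalent to $b\equiv(d-1)/2\pmod{2^{t-1}}$, yielding $2^{k-t+1}$ lifts, so $\#A(2^k)=2^{k-t+1}\Phi(2^k)=2^{k-\min\{k,s-1\}}\Phi(2^{k+1})$ via $\Phi(2^{k+1})=2\Phi(2^k)$.

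The main technical point is the twisted case: one must confirm that $\alpha_{2^k}$ is a well-defined generator of $R_{2^k}$ modulo $\mathbb{Q}^\times\cap R_{2^k}$ despite $\zeta_{2^{k+1}}$ not belonging to $R_{2^k}$, and correctly identify the generator of $\mathbb{Q}^\times\cap R_{2^k}$ as $a_0^{e/2^{t-1}}$ rather than the naive $a_0^{e/2^{t}}$. This shift---caused by the squaring that enters when one unwinds the $\zeta_{2^{k+1}}$ hidden in $\alpha_{2^k}$---is exactly what turns $b+1\equiv d$ in case (ii) into $2b+1\equiv d$ in case (i); once it is verified, the three assertions of the proposition drop out of the same matrix computation.
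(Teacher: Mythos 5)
Your overall strategy is the same as the paper's: fix a compatible pair of generators ($\zeta_{p^k}$ and a radical element) for $R_{p^k}$, record an automorphism by the exponent pair $(b,d)$, extract a single congruence between $b$ and $d$ from the requirement that $\mathbb{Q}^\times\cap R_{p^k}$ be fixed pointwise, and then count. Your identification of the generator $\gamma$ of $\mathbb{Q}^\times\cap R_{p^k}$ plays exactly the role of the paper's classification of all relations $\beta^m\zeta_{p^k}^n\in\mathbb{Q}^\times$ (its Cases (a) and (b)), and your treatment of the twisted $2$-adic case --- including the shift that turns $b+1\equiv d$ into $2b+1\equiv d$ --- agrees with the paper's computation.

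There is, however, a genuine error in case (ii), in the sub-case $p$ odd, $e$ even, $a<0$ (the twisted case at odd primes, which your case (ii) must cover since it includes \emph{all} odd $p$). There $a=-a_0^e$, so your ``direct check'' fails: $\alpha_{p^k}^{p^k}=\zeta_{p^k}^{p^k}a_0^{e}=a_0^e=-a\neq a$. Worse, $-a\notin\langle a\rangle$ (if $-a=a^n$ then $|a|^{n-1}=1$, forcing $n=1$ and hence $-a=a$, a contradiction), and multiplying $\alpha_{p^k}$ by any element of $\mu_{p^k}$ does not change its $p^k$-th power; hence $\alpha_{p^k}\notin R_{p^k}$ at all, so it cannot generate $R_{p^k}$, and likewise your $\gamma=a_0^{e/p^t}$ is not in $R_{p^k}$ (the true generator of $\mathbb{Q}^\times\cap R_{p^k}$ in this sub-case is $-a_0^{e/p^t}$). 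The paper avoids this by taking the radical generator to be $\zeta_{p^k}\bigl(\pm a_0^{e_1}\bigr)^{1/p^{k-\nu_p(e)}}$ with the sign of $a$, i.e.\ a genuine $p^k$-th root of $a$ rather than of $|a|$; with that correction your computation goes through verbatim and still yields $b+1\equiv d\ ({\rm mod}\ p^{\min\{k,\nu_p(e)\}})$, so the statement itself is unaffected, but as written your proof breaks in this sub-case. One smaller point: your count tacitly assumes that \emph{every} pair $(b,d)$ satisfying the congruence arises from an element of $A(p^k)$. This does follow --- $R_{p^k}=\mu_{p^k}\times\langle\alpha_{p^k}\rangle$ is a direct product, so any assignment on generators extends to an endomorphism, it is invertible exactly when $d$ is a unit, and fixing $\gamma$ (together with $-1$, which is automatic) fixes all of $\mathbb{Q}^\times\cap R_{p^k}$ --- but you should state this, since this surjectivity is precisely where the paper spends the bulk of its proof.
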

\begin{proof}

(i) Let $a=-a_0^e$ as before and $e=2^{\nu_2(e)}e_1$, where $\nu_2(e)\geq 1$ and $e_1$ is odd. For $q\in \mathbb{Q}$, let $q\mapsto a_0^q$ be the fixed embedding $\mathbb{Q} \rightarrow \overline{\mathbb{Q}}^\times$ defined at the beginning of Section 2. We also fix a collection $\{\zeta_n;~n\in \mathbb{N}\}$ of primitive roots of unity for which $(\zeta_{mn})^{m}=\zeta_n$ for $m, n\in \mathbb{N}$.

Recall that $R_{2^k}$ is the group of $2^k$- radicals. We have  $$R_{2^k} =\langle\zeta_{2^{k+1}} \left(a_0^{e_1}\right)^{1/2^{k-\nu_2(e)}}, \zeta_{2^k}  \rangle=\langle \beta, \zeta_{2^k}  \rangle.$$
An automorphism $\tau\in A(2^k)$ is determined by its action on these generators of $R_{2^k}$, i.e., $\beta$ and $\zeta_{2^k}$. We have
$\tau(\beta)=\beta \zeta_{2^{k}}^{b(\tau)}$ and $\tau(\zeta_{2^k})=\zeta_{2^k}^{d(\tau)}$, where $b(\tau)\in \mathbb{Z}/2^k\mathbb{Z}$ and $d(\tau)\in \left(\mathbb{Z}/2^k\mathbb{Z}\right)^{\times}$.  
We consider two cases.

Case 1: $k\geq \nu_2(e)+1$. We have
$$a_0^{e_1} \tau(\zeta_{2^{k+1}}^{2^{k-\nu_2(e)}})=\tau(\beta^{2^{k-\nu_2(e)}})=(\beta \zeta_{2^{k}}^{b(\tau)})^{2^{k-\nu_2(e)}}=a_0^{e_1} \zeta_{2^{k+1}}^{2^{k-\nu_2(e)}}  \zeta_{2^{k}}^{b(\tau)2^{k-\nu_2(e)}}. $$
From here we get 
$$\zeta_{2^k}^{d(\tau)2^{k-\nu_2(e)-1}}=\zeta_{2^k}^{2^{k-\nu_2(e)-1}+b(\tau) 2^{k-\nu_2(e)}}.$$
Since $k\geq \nu_2(e)+1$, this is equivalent to $2b(\tau)+1\equiv d(\tau)~({\rm mod}~2^{\nu_2(e)+1})$. 

Case 2: $k< \nu_2(e)+1$.  We have
$$(a_0^{e_1})^{2/2^{k-\nu_2(e)}} \tau(\zeta_{2^{k+1}}^{2})=\tau(\beta^{2})=(\beta \zeta_{2^{k}}^{b(\tau)})^{2}=(a_0^{e_1})^{2/2^{k-\nu_2(e)}} \zeta_{2^{k+1}}^{2}  \zeta_{2^{k}}^{2b(\tau)}. $$
From here we get 
$$\zeta_{2^k}^{d(\tau)}=\zeta_{2^k}^{1+2b(\tau) }.$$
This is equivalent to $2b(\tau)+1\equiv d(\tau)~({\rm mod}~2^{k})$.

So any $\tau\in A(2^k)$ injects to a matrix $\begin{psmallmatrix}1 & 0\\b(\tau) & d(\tau)\end{psmallmatrix}$ in the group of matrices given in part (i) of the proposition. Thus, $A(2^k)$ is isomorphic to a subgroup of the given group of matrices. We claim that $A(2^k)$ is, in fact, isomorphic to the whole of this group of matrices.

To prove the claimed isomorphism, it is enough to show that for any $\tau\in A(2^k)$, the congruence
\begin{equation}
\label{only}
2b(\tau)+1\equiv d(\tau)~({\rm mod}~2^{\min\{k,\nu_2(e)+1\}})
\end{equation}
is the only relation between $b(\tau)$ and $d(\tau)$ appearing in $\tau(\beta)=\beta \zeta_{2^{k}}^{b(\tau)}$ and $\tau(\zeta_{2^k})=\zeta_{2^k}^{d(\tau)}$.
Recall that $R_{2^k}$ is generated by $\beta$ and $\zeta_{2^k}$
and the elements of $A(2^k)$ (automorphisms of $R_{2^k}$ that fix $\mathbb{Q}^\times$) are determined by their actions on $\beta$ and $\zeta_{2^k}$. Also, such automorphisms should fix the rational elements of $R_{2^k}$ and preserve any relation between $\beta$ and $\zeta_{2^k}$. Since $R_{2^k}$ is a multiplicative abelian group, any such relationship should be in the form 
\begin{equation}
\label{eqs}
\beta^m \zeta_{2^k}^{n}=r\in \mathbb{Q}^{\times}. 
\end{equation}
To study \eqref{eqs} we consider two cases.

Case (a): Let $k\geq \nu_2(e)+1$ and $\beta^m \zeta_{2^k}^n =r \in \mathbb{Q}^\times$. Then, $|a_0^{me_1/2^{k-\nu_2(e) }}|=|r|\in \mathbb{Q}^\times$. Hence, $m=m_1(2^{k-\nu_2(e) })$, for $m_1 \in \mathbb{Z}$. Replacing this in \eqref{eqs} yields $\zeta_{2^k}^{m_1(2^{k-\nu_2(e)-1 })+n} \in \mathbb{Q}^\times$. Hence, $n=-m_1(2^{k-\nu_2(e)-1})+\ell \Phi(2^k)$, for $\ell \in \mathbb{Z}$. Thus,  the relations \eqref{eqs} are in the form
\begin{equation}
\label{relations}
\beta^{m_1(2^{k-\nu_2(e)})}\zeta_{2^k}^{-m_1(2^{k-\nu_2(e)-1})}=(a_0^{e_1})^{m_1}
\end{equation}
for $m_1 \in \mathbb{Z}$. (Note that $\beta^{2^{k-\nu_2(e)}}\zeta_{2^k}^{-(2^{k-\nu_2(e)-1})}=a_0^{e_1}$ and thus the relations $\beta^{m_1(2^{k-\nu_2(e)})}\zeta_{2^k}^{-m_1(2^{k-\nu_2(e)-1})}=-(a_0^{e_1})^{m_1}$ cannot happen.)
Now if 
$$\beta^{m_1(2^{k-\nu_2(e)})}\zeta_{2^k}^{-m_1(2^{k-\nu_2(e)-1})}=(a_0^{e_1})^{m_1}$$
applying $\tau\in A(2^k)$ on both sides of this identity and following a computation similar to Case 1 above, we conclude that
\begin{equation}
\label{frel}
2b(\tau)+1\equiv d(\tau)~({\rm mod}~2^{\nu_2(e)+1-\nu_2(m_1)})
\end{equation}
for $0\leq \nu_2(m_1) \leq \nu_2(e)$, and no condition if $\nu_2(m_1)>\nu_2(e)$. Since $m_1$ can be any arbitrary integer, then
\begin{equation}
\label{cong1}
2b(\tau)+1\equiv d(\tau)~({\rm mod}~2^{\nu_2(e)+1})
\end{equation}
implies all the congruences \eqref{frel}. 

Case (b): Let $k< \nu_2(e)+1$ and $\beta^m \zeta_{2^k}^n  \in \mathbb{Q}^\times$. Then, $\zeta_{2^{k+1}}^m \zeta_{2^k}^n \in \mathbb{Q}^\times$, which implies $\zeta_{2^k}^{\frac{m}{2}+n}=\pm 1$. Hence, $m=2m_1$ for $m_1\in \mathbb{Z}$, and $n=-m_1+\ell \Phi(2^k)$ for $\ell \in \mathbb{Z}$.
Thus,  the relations \eqref{eqs} are in the form
$$\beta^{2m_1}\zeta_{2^k}^{-m_1}=(a_0^{e_1})^{(2m_1)/2^{k-\nu_2(e)}}$$
for $m_1 \in \mathbb{Z}$. (Note that $\beta^{2}\zeta_{2^k}^{-1}=(a_0^{e_1})^{2/2^{k-\nu_2(e)}}$ and thus the relations $\beta^{2m_1}\zeta_{2^k}^{-m_1}=-(a_0^{e_1})^{(2m_1)/2^{k-\nu_2(e)}}$ cannot happen.)
Applying $\tau\in A(2^k)$ on both sides of this identity and following a computation similar to Case 2 above, we conclude that
$$2b(\tau)+1\equiv d(\tau)~({\rm mod}~2^{k-\nu_2(m_1)})$$
for $0\leq \nu_2(m_1)\leq k-1$, and no relation if $\nu_2(m_1)>k-1$. Since $m_1$ can be any arbitrary integer, then
\begin{equation}
\label{cong2}
2b(\tau)+1\equiv d(\tau)~({\rm mod}~2^{k})
\end{equation}
implies all these relations.

In conclusion, from \eqref{cong1} and \eqref{cong2}, we get that the congruence \eqref{only}
is the only existing relation among the entries of matrices $\begin{psmallmatrix}1 & 0\\b(\tau) & d(\tau)\end{psmallmatrix}$. Hence, the injection $\tau\rightarrow \begin{psmallmatrix}1 & 0\\b(\tau) & d(\tau)\end{psmallmatrix}$ establishes the claimed isomorphism.

(ii) The proof is analogous to the proof of (ii) by considering compatible systems of generators for the groups $R_{p^k}$'s. More precisely, for $p=2$, $R_{2^k} =\langle \zeta_{2^k}\left(a_0^{e_1}\right)^{1/2^{k-\nu_2(e)}}, \zeta_{2^k}  \rangle,$
where ${\rm gcd}(e_1, 2)=1$, and, for odd $p$, $R_{p^k} =\langle \zeta_{p^k}\left(\pm a_0^{e_1}\right)^{1/p^{k-\nu_p(e)}}, \zeta_{p^k}  \rangle,$
where ${\rm gcd}(e_1, p)=1$.

(iii) This is a consequence of parts (i) and (ii), since 
the sizes of the groups of matrices in parts (i) and (ii) are the same as the claimed sizes for $A(p^k)$. \end{proof}

The following proposition indicates the significance of the integer $s$ defined in \eqref{s-def}.

\begin{proposition}
\label{factoring}
The number $s$ defined in \eqref{s-def} is the smallest integer $k$ for which  $\psi_{K, 2}$ factors via $A(2^k)$.
\end{proposition}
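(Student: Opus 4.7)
The plan is to reduce the claim to an explicit calculation with the matrix model of $A(2^k)$ provided by Proposition \ref{new-prop}. Write $e=2^{\nu_2(e)}e_1$ with $e_1$ odd. The key preliminary reduction is the factorization $a_0^{1/2}=a_0^{-(e_1-1)/2}\cdot a_0^{e_1/2}$, where $(e_1-1)/2\in\ZZ$ so the first factor lies in $\QQ^\times$ and is fixed by every element of $A_2$. Consequently $\psi_{K,2}(\tau)=\tau(a_0^{e_1/2})/a_0^{e_1/2}$, so the problem becomes: for which smallest $k$ does this ratio depend only on $\tau|_{R_{2^k}}$? The advantage of the substitution is that, unlike $a_0^{1/2}$ itself, $a_0^{e_1/2}$ can be written as a power of the Kummer generator $\beta$ of $R_{2^k}$ from the proof of Proposition \ref{new-prop}, so the $\tau$-action on it is tractable through the matrix parameters $b=b(\tau)$ and $d=d(\tau)$.

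Next, I would carry out the explicit computation in each of the cases of Proposition \ref{new-prop}. In the untwisted cases ($e$ odd or $a>0$) one has $\beta^{2^{k-\nu_2(e)-1}}=\zeta_{2^{\nu_2(e)+1}}\cdot a_0^{e_1/2}$ for $k\geq \nu_2(e)+1$, and combining this with $\tau(\beta)=\beta\zeta_{2^k}^b$ and $\tau(\zeta_{2^k})=\zeta_{2^k}^d$ yields
\[
\psi_{K,2}(\tau)=\zeta_{2^{\nu_2(e)+1}}^{1+b-d}.
\]
In the twisted case ($e$ even and $a<0$) the analogous identity is $\beta^{2^{k-\nu_2(e)-1}}=\zeta_{2^{\nu_2(e)+2}}\cdot a_0^{e_1/2}$, and the same bookkeeping produces
\[
\psi_{K,2}(\tau)=\zeta_{2^{\nu_2(e)+2}}^{1-d+2b}.
\]
In both formulas the defining congruence on $(b,d)$ from Proposition \ref{new-prop} forces the exponent into the subgroup corresponding to $\mu_2$, consistent with $\psi_{K,2}$ being quadratic.

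From these formulas the minimal $k$ can be extracted directly. For $k\geq s$ the exponent is determined by $(b,d)\bmod 2^k$, so $\psi_{K,2}$ factors through $A(2^k)$. To see that $\psi_{K,2}$ does not factor through $A(2^{s-1})$, it suffices to exhibit a non-identity element of $\ker\bigl(A(2^s)\to A(2^{s-1})\bigr)$ on which $\psi_{K,2}$ equals $-1$: take $(b,d)=(2^{\nu_2(e)},1)$ in the untwisted cases, and $(b,d)=(0,\,1+2^{\nu_2(e)+1})$ in the twisted case. Each respects the defining congruence of $A(2^s)$, restricts to the identity on $R_{2^{s-1}}$, and plugs into the formula above to yield $-1$. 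The main obstacle is disciplined bookkeeping across the three sign/parity subcases together with the shift by one in the modulus in the twisted case; once the generators of $R_{2^k}$ are fixed as in Proposition \ref{new-prop}, the remaining manipulation with $2$-power roots of unity is routine.
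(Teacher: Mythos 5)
Your proposal is correct and follows essentially the same route as the paper's proof: both use the matrix model of Proposition \ref{new-prop}, the same reduction $\psi_{K,2}(\tau)=\tau(a_0^{e_1/2})/a_0^{e_1/2}$ via the rational factor $a_0^{(e_1-1)/2}$, and the same computation raising the generator $\beta$ to the power $2^{k-\nu_2(e)-1}$; indeed your twisted-case witness $(b,d)=(0,1+2^{\nu_2(e)+1})$ is exactly the element the paper uses. The only organizational difference is that you package the computation as a closed formula for $\psi_{K,2}$ in terms of $(b,d)$, from which both the factoring for $k\geq s$ and the non-factoring at level $s-1$ are read off, whereas the paper argues triviality on $\ker\varphi_{2^{s}}$ and non-triviality on $\ker\varphi_{2^{s-1}}$ as two separate computations and writes out only the twisted case.
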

\begin{proof}
For integers $k\geq 0$, let $\varphi_{p^k}:A_p\to A(p^k)$ be the projection map. It is enough to show that $\psi_{K, 2}$ is non-trivial on 
$\ker \varphi_{2^{s-1}}$ and is trivial on $\ker \varphi_{2^{s}}$.
We write the proof for the twisted case, where $s=\nu_2(e)+2$. The proof for the untwisted case is similar.

Assume that $a=-(a_0^{e_1})^{2^{\nu_2(e)}}$ as in part (i) of Proposition \ref{new-prop} and assume the compatibility conditions for roots of $a_0$ and roots of unity described there. Considering
$$R_{2^{\nu_2(e)+2}}=\langle\zeta_{2^{\nu_2(e)+3}} \left(a_0^{e_1}\right)^{1/4}, \zeta_{2^{\nu_2(e)+2}}  \rangle,$$ let $\alpha\in A_2$ be such that
$$\tau_2=\varphi_{2^{\nu_2(e)+2}}(\alpha)= \left(\begin{array}{cc} 1&0\\0&1+2^{\nu_2(e)+1} \end{array}\right)\in A(2^{\nu_2(e)+2}).$$
Observe that $\alpha\in \ker \varphi_{2^{\nu_2(e)+1}}$ and  we have
\begin{equation}
\label{T1}
\tau_2(\zeta_{2^{\nu_2(e)+3}} \left(a_0^{e_1}\right)^{1/4})=\zeta_{2^{\nu_2(e)+3}} \left(a_0^{e_1}\right)^{1/4}.
\end{equation}
Raising both sides of \eqref{T1} to power $2$ and observing that
$\zeta_{2^{\nu_2(e)+2}}$ and $ \left(a_0^{e_1}\right)^{1/2}$ are in $R_{2^{\nu_2(e)+2}}$,  we get
\begin{equation}
\label{T2}
\tau_2(\zeta_{2^{\nu_2(e)+2}}) \tau_2( \left(a_0^{e_1}\right)^{1/2})=\zeta_{2^{\nu_2(e)+2}} \left(a_0^{e_1}\right)^{1/2}.
\end{equation}
Now since 
$\tau_2(\zeta_{2^{\nu_2(e)+2}})= \zeta_{2^{\nu_2(e)+2}}^{1+2^{\nu_2(e)+1}}=-\zeta_{2^{\nu_2(e)+2}}$, the equation \eqref{T2} implies that $$\tau_2( a_0^{e_1/2})=-  a_0^{e_1/2}.$$ 
Hence,
$$\tau_2( a_0^{1/2}a_0^{(e_1-1)/2})=-  a_0^{1/2}a_0^{(e_1-1)/2}.$$ 
Thus for $\alpha\in \ker \varphi_{2^{\nu_2(e)+1}}$, we have $\psi_{K, 2}(\alpha)=-1$. Hence, $\psi_{K, 2}$ is non-trivial on $\ker \varphi_{2^{\nu_2(e)+1}}$.

Next, let $\alpha\in A_2$ be such that $\alpha\in \ker \varphi_{2^{\nu_2(e)+2}}$. Hence,
\begin{equation*}
\label{T3}
\tau_3=\varphi_{2^{\nu_2(e)+3}}(\alpha)= \left(\begin{array}{cc} 1&0\\ b&d \end{array}\right)\in A(2^{\nu_2(e)+3})
\end{equation*}
and 
\begin{equation}
\label{T4}
\left(\begin{array}{cc} 1&0\\ b&d \end{array}\right)
= \left(\begin{array}{cc} 1&0\\ 0&1 \end{array}\right)~{\rm in}~A(2^{\nu_2(e)+2} ).
\end{equation}
Hence, $b=2^{\nu_2(e)+2}b_1$ for some integer $b_1$. We have
\begin{equation*}
\tau_3(\zeta_{2^{\nu_2(e)+4}} \left(a_0^{e_1}\right)^{1/8})=\zeta_{2^{\nu_2(e)+4}} \left(a_0^{e_1}\right)^{1/8} \zeta_{2^{\nu_2(e)+3}}^{2^{\nu_2(e)+2}b_1}.
\end{equation*}
Squaring both sides of this identity yields
\begin{equation*}
\tau_3(\zeta_{2^{\nu_2(e)+3}}) \tau_3(\left(a_0^{e_1}\right)^{1/4})=\zeta_{2^{\nu_2(e)+3}} \left(a_0^{e_1}\right)^{1/4}.
\end{equation*}
This implies
\begin{equation}
\label{T5}
\tau_3(\left(a_0^{e_1}\right)^{1/4})=\frac{\zeta_{2^{\nu_2(e)+3}}}{\zeta_{2^{\nu_2(e)+3}}^d} \left(a_0^{e_1}\right)^{1/4}.
\end{equation}
Now observe, 
from  \eqref{T4}, that
\begin{equation}
\label{T6}
d=1+2^{\nu_2(e)+2}d_1
\end{equation}
for some integer $d_1$. Raising both sides of \eqref{T5} to power 2 and employing \eqref{T6} yield 
$$\tau_3(\left(a_0^{e_1}\right)^{1/2})= \left(a_0^{e_1}\right)^{1/2}.
$$
Hence,
$$\tau_3( a_0^{1/2}a_0^{(e_1-1)/2})= a_0^{1/2}a_0^{(e_1-1)/2}.$$ 
Thus, $\psi_{K, 2}$ is trivial on $\ker \varphi_{2^{\nu_2(e)+2}}$.

\end{proof}

The following proposition is essential in proving Theorem \ref{product-kummer-family}.

\begin{proposition}
\label{character}
Let $\ell(p)$ be the smallest integer $k$ for which $\chi_p$ factors via $A(p^{k})$. Then
$$\ell(p)=\left\{\begin{array}{ll}
0&\text{if~} p~ \text{is~ odd~and~}p\nmid D,\\1&\text{if~} p~ \text{is~ odd~and~}p\mid D,\\
s&\text{if}~ p=2~ \text{and}~ D ~\text{is odd},\\
\max\{2,s\}&\text{if}~ p=2~ \text{and}~ 4\Vert D,\\

2&\text{if}~ p=2,~8\Vert D,~\text{and}~(\nu_2(e)=1~\text{and}~a<0),\\  
\max\{3,s\}&\text{if}~ p=2,~ 8\Vert D,~\text{and}~(\nu_2(e)\neq 1~\text{or}~a>0).\end{array}
\right.
$$

\end{proposition}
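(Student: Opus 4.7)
The plan is to compute the character $\chi_p$ on the nested kernels $\ker\varphi_{p^k}\subset A_p$ and identify the smallest $k$ for which $\chi_p$ becomes trivial. The main tools are the matrix descriptions of $A(p^k)$ from Proposition~\ref{new-prop} and the decomposition $\chi_p=\chi_{D,p}$ for odd $p$ and $\chi_2=\chi_{D,2}\cdot\psi_{K,2}$ at $p=2$ recalled in Section~\ref{section:character}.

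For odd $p$ the argument is immediate: if $p\nmid D$ then $\chi_p=1$ by construction, so $\ell(p)=0$; if $p\mid D$ then $\chi_p=\chi_{D,p}$ is the pullback of the Legendre symbol $\left(\tfrac{D}{\cdot}\right)$ modulo $p$ along the cyclotomic projection $A_p\to(\mathbb{Z}/p\mathbb{Z})^\times$, has conductor $p$, hence factors through $A(p)$ but not $A(p^0)$, yielding $\ell(p)=1$. For $p=2$ I would proceed by cases on the $2$-part of $D$. Proposition~\ref{factoring} places the conductor level of $\psi_{K,2}$ at exactly $s$, while the three Dirichlet characters whose inflations realize $\chi_{D,2}$ have conductor levels $0$, $2$, and $3$ in the respective cases $D$ odd, $4\Vert D$, $8\Vert D$. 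When $D$ is odd, $\chi_2=\psi_{K,2}$ and Proposition~\ref{factoring} gives $\ell(2)=s$. When $4\Vert D$, the bound $\ell(2)\le\max\{2,s\}$ is automatic; for the matching lower bound I would exhibit, at level $k=\max\{2,s\}-1$, a matrix representative $\tau=\begin{psmallmatrix}1&0\\b&d\end{psmallmatrix}\in\ker\varphi_{2^k}$ on which $\chi_{D,2}$ and $\psi_{K,2}$ disagree, by choosing $d\in(\mathbb{Z}/2^k\mathbb{Z})^\times$ and $b$ satisfying the compatibility relation of Proposition~\ref{new-prop}. The non-cyclotomic nature of $\psi_{K,2}$ against the purely cyclotomic $\chi_{D,2}$ keeps the two characters from coinciding on this quotient of kernels.

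The delicate case is $8\Vert D$. Generically one expects $\ell(2)=\max\{3,s\}$, but when $\nu_2(e)=1$ and $a<0$ both characters have conductor level exactly $3$, and I claim they coincide on $\ker\varphi_{2^2}/\ker\varphi_{2^3}$, so that $\chi_2$ trivializes already on $\ker\varphi_{2^2}$ and produces the drop to $\ell(2)=2$. I would verify this coincidence---and its failure in every other sub-case of $8\Vert D$---by computing $\chi_{D,2}(\tau)=\left(\tfrac{\pm 8}{d}\right)$ against $\psi_{K,2}(\tau)$ for $\tau=\begin{psmallmatrix}1&0\\b&d\end{psmallmatrix}$, the latter read off from the action on $a_0^{1/2}$ via the generators $\zeta_{2^{k+1}}(a_0^{e_1})^{1/2^{k-\nu_2(e)}}$ and $\zeta_{2^k}$ of $R_{2^k}$ used in the proof of Proposition~\ref{factoring}. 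The sign $\pm$ in $\chi_{D,2}$ is controlled by $\sgn(D)$ and the parity of the number of odd prime factors of $D$ that are congruent to $3\pmod 4$, while $8\Vert D$ combined with $a_0<0$ forces $|a_0|\equiv 2\pmod 4$; matching these data against the twisted compatibility $2b+1\equiv d\pmod{2^{\nu_2(e)+1}}$ from Proposition~\ref{new-prop}(i) pins down the coincidence precisely in the exceptional sub-case, while a suitably chosen test matrix supplies a witness of non-coincidence in all others.

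The main obstacle is exactly this final case analysis for $8\Vert D$: the cancellation between $\chi_{D,2}$ and $\psi_{K,2}$ is sensitive to the parity of $e$, the sign of $a$, and the $2$-adic data of $D$, and isolating it as occurring in only the single sub-case $\nu_2(e)=1$, $a<0$ is what forces the specific case split in the statement. Once those matrix-level computations are carried out, the proposition follows by collecting the values of $\ell(p)$ across all cases.
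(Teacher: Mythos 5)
Your skeleton matches the paper's proof: dispose of odd primes and the case $D$ odd via Proposition \ref{factoring}, then for $p=2$ compare the conductor levels of $\psi_{K,2}$ and $\chi_{D,2}$ (levels $s$ and $0$, $2$, $3$ respectively), settle the sub-cases where the levels agree by explicit matrix computations through Proposition \ref{new-prop}, and isolate the unique cancellation at $8\Vert D$, $\nu_2(e)=1$, $a<0$. However, two points in your write-up are genuine gaps rather than deferred routine work. First, your justification of the lower bound in the $4\Vert D$ case --- that the non-cyclotomic nature of $\psi_{K,2}$ against the cyclotomic $\chi_{D,2}$ ``keeps the two characters from coinciding on this quotient of kernels'' --- is not a valid argument: two globally distinct characters can perfectly well agree after restriction to a subgroup, and your own exceptional case is exactly such an instance ($\psi_{K,2}$ and $\chi_{D,2}$ are distinct, one cyclotomic and one not, yet they do coincide on $\ker\varphi_{2^2}$ when $8\Vert D$, $\nu_2(e)=1$, $a<0$). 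So this heuristic proves nothing anywhere; in the only delicate sub-case of $4\Vert D$, namely $s=2$ ($a$ an exact square), the paper instead produces $\alpha\in\ker\varphi_{2}$ with $\varphi_{2^2}(\alpha)=\begin{psmallmatrix}1&0\\0&3\end{psmallmatrix}$ (legitimate by Proposition \ref{new-prop}(ii)) and computes $\chi_2(\alpha)=(1)(-1)=-1$. Your fallback of exhibiting a witness matrix is the right fix, but as written the case rests on the faulty heuristic.

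Second, in the exceptional case $8\Vert D$, $\nu_2(e)=1$, $a<0$, coincidence of the two characters on $\ker\varphi_{2^2}$ only gives $\ell(2)\le 2$; to conclude $\ell(2)=2$ you must also show $\chi_2$ is nontrivial on $\ker\varphi_{2}$, a step your proposal never mentions. The paper does this with $\alpha\in\ker\varphi_{2}$ mapping to $\begin{psmallmatrix}1&0\\6&1\end{psmallmatrix}\in A(2^3)$, allowed by the twisted compatibility relation since $2\cdot 6+1\equiv 1\pmod{2^2}$, for which $\chi_2(\alpha)=(-1)(1)=-1$. Likewise, the coincidence on $\ker\varphi_{2^2}$ itself and its failure in the other level-$3$ sub-case ($a>0$, $\nu_2(e)=2$) are established in the paper by finite checks (the four cosets $\begin{psmallmatrix}1&0\\0&1\end{psmallmatrix},\begin{psmallmatrix}1&0\\0&5\end{psmallmatrix},\begin{psmallmatrix}1&0\\4&1\end{psmallmatrix},\begin{psmallmatrix}1&0\\4&5\end{psmallmatrix}$ in the first instance, the witness $\begin{psmallmatrix}1&0\\0&5\end{psmallmatrix}$ in the second), none of which you carry out. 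Finally, a factual slip in your input data: $8\Vert D$ forces the \emph{squarefree part} of $a_0$, not $|a_0|$ itself, to be $\equiv 2\pmod 4$ (take $a_0=-24$, where $D=-24$ but $|a_0|\equiv 0\pmod 8$).
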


\begin{proof}
If $p\nmid 2D$, by the definition of $\chi_p$, we have that $\chi_p$ is constantly equal to $1$. Thus, the assertion holds.

If $p$ is an odd integer dividing $D$, then $\chi_p$ is the Legendre symbol mod $p$, so the result follows.

If $p=2$ and $D$ is odd, then $\chi_2=\psi_{K, 2}$. Thus, the result follows from Proposition \ref{factoring}.

If $p=2$ and $4\Vert D$, then $\chi_2=\psi_{K, 2} \chi_{D, 2}$, where $\chi_{D, 2}$ is the Dirichlet character mod $8$ of conductor $4$. 
{We are looking for a positive integer $k$  such that $\psi_{K,2} (\alpha)\neq \chi_{D, 2}(\alpha)$ for an element $\alpha\in \ker\varphi_{2^{k-1}}$, and $\psi_{K,2} (\alpha)= \chi_{D, 2}(\alpha)$ for all $\alpha\in \ker\varphi_{2^k}$.}
Note that $2$ is the smallest value of $k$ for which $\chi_{D, 2}$ factors via $A(2^k)$, and, by Proposition \ref{factoring}, $s$ is the smallest value of $k$ for which $\psi_{K, 2}$ factors via $A(2^k)$. Thus, $\chi_{D, 2}$ is trivial on $\ker\varphi_{2^k}$ for $k\geq 2$ and is nontrivial on $\ker\varphi_{2^{k}}$ for $0\leq k\leq 1$. Also, $\psi_{K, 2}$ is trivial on $\ker\varphi_{2^k}$ for $k\geq s$ and is nontrivial on $\ker\varphi_{2^{k}}$ for $0\leq k< s$. Using these facts and a case-by-case analysis in terms of the values of $\nu_2(e)$,  and for the untwisted and twisted cases, 
we can see that the claimed assertion, in this case, holds.  More precisely, if $\nu_2(e)=0$, then $\chi_2$ factors via $A(2^2)$. Otherwise, $\chi_2$ factors via $A(2^s)$.
The only case that needs special attention is when $s=2$, i.e.,  $a$ is an exact perfect square (i.e., $a>0$ and $\nu_2(e)=1$). In this case, $\max\{2, s\}=2$ and both $\psi_{K, 2}$ and $\chi_{D, 2}$ are trivial on $\ker\varphi_{2^2}$, hence $\chi_2$ is trivial on $\ker\varphi_{2^2}$. Let $\alpha\in \ker\varphi_{2}$ be such that $\varphi_{2^2}(\alpha)= \begin{psmallmatrix}1 & 0\\0 & 3\end{psmallmatrix}\in A(2^2)$. Note that $0+1\equiv 3$ (mod $2$), so by Proposition \ref{new-prop}(ii) such $\alpha$ exists. We have $\chi_2(\alpha)=\psi_{K, 2}(\alpha)\chi_{D, 2}(\alpha)=(1)(-1)=-1$. Thus, $\chi_2$ is non-trivial  on $\ker\varphi_{2}$. Hence, $\chi_2$ factors via $A(2^2)=A(2^s)=A(2^{\max\{2, s\}})$ but not via $A(2)$.

If $p=2$ and $8\Vert D$, similar to part (iv), a case-by-case analysis in terms of the values of $\nu_2(e)$, and for the untwisted and twisted cases, we can verify the result. (Note that in this case $3$ is the smallest values of $k$ for which $\chi_{D, 2}$ factors via $A(2^k)$.) More precisely, if $\nu_2(e)=0$ or $1$, and $-a$ is not a perfect square, then $\chi_2$ factors via $A(2^3)$.  Also, if $\nu_2(e)=1$ and $a<0$, then $\chi_2$ factors via $A(2^2)$.   Otherwise, $\chi_2$ factors via $A(2^s)$. Two cases need special attention.

Case 1: The number $a$ is negative of an exact perfect square (i.e., $a<0$ and $\nu_2(e)=1$). In this case, $\max\{3, s\}=3$ and both $\psi_{K, 2}$ and $\chi_{D, 2}$ are trivial on $\ker\varphi_{2^3}$, hence $\chi_2$ is trivial on $\ker\varphi_{2^3}$. Thus, $\chi_2$ acts through $A(2^3)$. Let $\alpha\in \ker\varphi_{2^2}$. Then $\varphi_{2^3}(\alpha)= \begin{psmallmatrix}1 & 0\\b & d\end{psmallmatrix}\in A(2^3)$ is such that $\begin{psmallmatrix}1 & 0\\b & d\end{psmallmatrix}\equiv \begin{psmallmatrix}1 & 0\\0 & 1\end{psmallmatrix}$ (mod $2^2$). 
Hence,

$$ \left(\begin{array}{cc}1 & 0\\b & d\end{array}\right )\in \left\{ \left(\begin{array}{cc}1 & 0\\0 & 1\end{array}\right ),  \left(\begin{array}{cc}1 & 0\\0 & 5\end{array}\right ),  \left(\begin{array}{cc}1 & 0\\4 & 1\end{array}\right ),  \left(\begin{array}{cc}1 & 0\\4 & 5\end{array}\right )  \right\}\subset A(2^3).$$
Since for each $\alpha$ corresponding to the above matrices we have $\chi_2(\alpha)=\psi_{K, 2}(\alpha)\chi_{D, 2}(\alpha)=1$, we conclude that $\chi_2$ is trivial on $\ker \varphi_{2^2}$. Now let $\alpha\in \ker\varphi_{2}$ be such that $\varphi_{2^3}(\alpha)= \begin{psmallmatrix}1 & 0\\6 & 1\end{psmallmatrix}\in A(2^3)$. Note that $(2)(6)+1\equiv 1$ (mod $2^2$), so by Proposition \ref{new-prop}(i) such $\alpha$ exists. We have $\chi_2(\alpha)=\psi_{K, 2}(\alpha)\chi_{D, 2}(\alpha)=(-1)(1)=-1$. Thus, $\chi_2$ is non-trivial  on $\ker\varphi_{2}$. Hence, $\chi_2$ factors via $A(2^2)$ as claimed.

Case 2:  The number $a$ is an exact perfect fourth power (i.e., $a>0$ and $\nu_2(e)=2$). In this case, $\max\{3, s\}=3$ and both $\psi_{K, 2}$ and $\chi_{D, 2}$ are trivial on $\ker\varphi_{2^3}$, hence $\chi_D$ is trivial on $\ker\varphi_{2^3}$. Let $\alpha\in \ker\varphi_{2^2}$ be such that $\varphi_{2^3}(\alpha)= \begin{psmallmatrix}1 & 0\\0 & 5\end{psmallmatrix}\in A(2^3)$. Note that $0+1\equiv 5$ (mod $2^2$), so by Proposition \ref{new-prop}(ii) such $\alpha$ exists. We have $\chi_2(\alpha)=\psi_{K, 2}(\alpha)\chi_{D, 2}(\alpha)=(1)(-1)=-1$. Thus, $\chi_2$ is non-trivial  on $\ker\varphi_{2^2}$. Hence, $\chi_2$ factors via $A(2^3)=A(2^s)=A(2^{\max\{3, s\}})$.
\end{proof}

\section{Proof of Theorem \ref{main1}}
\label{sec-main1}

%
%


\begin{proof}[Proof of Theorem \ref{main1}]
\label{Section 3}
Let $\nu_G$ be the normalized Haar measure on the profinite group $G$, and $\nu_A$ be the normalized Haar measure on the profinite group $A$.
We start by writing the summation 
\begin{equation*}
    \sum_{n\geq1}\frac{g(n)}{\#G(n)}
\end{equation*}
in terms of measures of certain measurable subgroups of $G$. For this purpose, let $\pi_{G,n}: G\to G(n)$ be the projection map for each $n\geq1$. Then, $G/\ker\pi_{G,n}\cong G(n)$ and $[G:\ker\pi_{G,n}]=\#G(n)$. Hence, since $\ker\pi_{G, n}$ is a closed subgroup of $G$, we have $\nu_G(\ker\pi_{G,n})=1/\#G(n)$ (see \cite[Lemma 18.1.1.(a)]{FA}).
Thus,
\begin{equation}
\label{ev2}
    \sum_{n\geq1}\frac{g(n)}{\#G(n)}=\sum_{n\geq1}g(n)\nu_G(\ker\pi_{G,n}).
\end{equation}
Observe that 
\begin{equation}
\label{index}
[A:r(\ker\pi_{G,n})]=[A:r(G)][r(G):r(\ker\pi_{G,n})].
\end{equation}

Also, since $\chi$ is continuous and ${\rm ker}\chi=r(G)$, then $r(G)$ is a closed subgroup of $A$ and hence it is $\nu_A$-measurable. Similarly, since $r({\rm ker} \pi_{G, n})$ is a closed subgroup of $r(G)$ and $r(G)$ is a closed subgroup of $A$, then $r({\rm ker} \pi_{G, n})$ is a closed subgroup of $A$ and hence it is $\nu_A$-measurable. 
Thus, from \eqref{index}, we have 
\begin{equation}
\label{ev3}
    \nu_G(\ker\pi_{G,n})=\frac{\nu_A(r(\ker\pi_{G,n}))}{\nu_A(r(G))}.
\end{equation}
Now, since
\begin{equation}
\label{star}
    1\to G\stackrel{r}{\longrightarrow} A\stackrel{\chi}{\longrightarrow} \mu_m\to 1
\end{equation}
is an exact sequence, by (\ref{ev3}), we have
\begin{equation}
\label{ev4}
    \begin{split}
        \sum_{n\geq1}g(n)\nu_G(\ker\pi_{G,n}) & =\sum_{n\geq1}g(n)\frac{\nu_A(r(\ker\pi_{G,n}))}{\nu_A(\ker\chi)}\\
        & =\frac{1}{\nu_A(\ker\chi)}\sum_{n\geq1}g(n)\nu_A(r(\ker\pi_{G,n})).
    \end{split}
\end{equation}

Next, we show that $r(\ker\pi_{G,n})=\ker(\pi_{A,n})\cap\ker\chi$, where $\pi_{A,n}:A\to A(n)$ is the projection map for each $n\geq1$. To prove this claim, we note that the diagram
\begin{equation}
\label{commutative-G-to-A-main1}
    \begin{tikzcd}
     G\arrow[d,"r"]\arrow[r,"\pi_{G,n}"]&G(n)\arrow[d,"r_n"]\\
     A\arrow[r,"\pi_{A,n}"]&A(n)
    \end{tikzcd}
\end{equation}
commutes. For a group $H$, let $e_H$ denote its identity element. Note that if $\sigma\in\ker\pi_{G,n}$, then $r_n(\pi_{G,n}(\sigma))=r_n(e_{G(n)})=e_{A(n)}$. Hence, by the commutative diagram \eqref{commutative-G-to-A-main1}, we have $r(\sigma)\in\ker(\pi_{A,n})$. Moreover, by the exact sequence \eqref{star}, we have $r(\sigma)\in r(G)=\ker\chi$. Therefore, 
\begin{equation}
\label{ev5-1}
r(\ker\pi_{G,n})\subset \ker(\pi_{A,n})\cap\ker\chi.
\end{equation} 
On the other hand, if $\alpha\in\ker(\pi_{A,n})\cap\ker\chi\subset\ker\chi=r(G)$, then there exists a $\sigma\in G$ such that $r(\sigma)=\alpha$. Moreover, $r(\sigma)\in\ker(\pi_{A,n})$ means $\pi_{A,n}(r(\sigma))=e_{A(n)}$. Hence, $r_n(\pi_{G,n}(\sigma))=e_{A(n)}$ as \eqref{commutative-G-to-A-main1} is commutative. Thus, $\sigma\in\ker\pi_{G,n}$, since $r_n$ is injective. This shows that 
\begin{equation}
\label{ev5-2}
\ker(\pi_{A,n})\cap\ker\chi\subset r(\ker\pi_{G,n}).
\end{equation} 
Therefore, from \eqref{ev5-1} and \eqref{ev5-2}, we have
\begin{equation}
\label{ev5}
    r(\ker\pi_{G,n})=\ker(\pi_{A,n})\cap\ker\chi.
\end{equation}    

From (\ref{ev5}), we have
\begin{equation}
\label{ev6}
    \begin{split}
    \sum_{n\geq1}g(n)\nu_A(r(\ker\pi_{G,n})) 
    & =\sum_{n\geq1}g(n)\nu_A(\ker\pi_{A,n}\cap\ker\chi)\\ 
    & =\sum_{n\geq1}g(n)\int_A1_{\ker\pi_{A,n}\cap\ker\chi}d\nu_A\\ 
    & =\int_A\left(\sum_{n\geq1}g(n)1_{\ker\pi_{A,n}}\right)1_{\ker\chi}d\nu_A.
    \end{split}
\end{equation}
To justify the interchange of the summation and the integral in the last equality, observe that
\begin{equation*}
\left\lvert\sum_{n=1}^mg(n)1_{\ker\pi_{A,n}\cap\ker\chi}\right\rvert\leq\sum_{n\geq1}\lvert g(n)\rvert1_{\ker\pi_{A,n}\cap\ker\chi}.    
\end{equation*} 
Since by the assumption, $\sum_{n\geq1}\lvert g(n)\rvert/\#G(n)$ converges, then,
by \cite[Theorem 1.27]{rudin}, \\$\sum_{n\geq1}\lvert g(n)\rvert1_{\ker\pi_{A,n}\cap\ker\chi}$ is integrable. Thus, by Lebesgue's dominated convergence theorem (see \cite[ Theorem 1.34]{rudin}), the interchange of the summation and the integral in \eqref{ev6} is justified. Also, since $\#A(n)\geq \#G(n)$, then  $\sum_{n\geq1}\lvert g(n)\rvert/\#A(n)<\infty$. Hence, by \cite[Theorem 1.38]{rudin},  
$$\Tilde{g}=\sum_{n\geq1}g(n)1_{\ker\pi_{A,n}}\in L^1(\nu_A).$$
Now from (\ref{ev2}), (\ref{ev4}), and (\ref{ev6}), we have
\begin{equation}
    \label{ev7}
    \sum_{n\geq1}\frac{g(n)}{\#G(n)}
    =\frac{\int_A \Tilde{g}1_{\ker\chi}d\nu_A}{\int_A1_{\ker\chi}d\nu_A}.
\end{equation}

Note that the character $\chi:A\to\mu_m$ in (\ref{star}) induces the character $\chi':A/r(G)\stackrel{\sim}{\longrightarrow}\mu_m$ by $\chi'(\bar{\alpha})=\chi(\alpha)$, where $\alpha\in A$ and $\bar{\alpha}$ is the coset associated to $\alpha$ in $A/r(G)$. In other words, $\chi$ is the lift of $\chi'$ to $A$. Since $\chi'$ sends a generator of $A/r(G)$ to a generator of $\mu_m$, then $\chi'$ is a generator of the group of characters of $A/r(G)$ denoted by $\widehat{A/r(G)}$. Thus, for $\bar{\alpha}\in A/r(G)$, by \cite{course-in-arithmetic}*{Chapter \textrm{VI}, Proposition 4}, we have
\begin{equation*}
    \sum_{\epsilon\in\widehat{A/r(G)}}\epsilon(\bar{\alpha})=\sum_{i=0}^{m-1}(\chi')^i(\bar{\alpha})=
    \begin{cases}
    m &\quad \text{if }\;\bar{\alpha}=e_{A/r(G)},\\
    0 &\quad \text{if }\;\bar{\alpha}\neq e_{A/r(G)}.
    \end{cases}
\end{equation*}
Therefore, since $\bar{\alpha}=e_{A/r(G)}$ means $\alpha\in\ker\chi$, we have
\begin{equation*}
    \sum_{i=0}^{m-1}\chi^i(\alpha)=
    \begin{cases}
    m &\quad \text{if }\;\alpha\in\ker\chi,\\
    0 &\quad \text{if }\;\alpha\notin\ker\chi.
    \end{cases}
\end{equation*}
This implies $\sum_{i=0}^{m-1}\chi^i(\alpha)=m\cdot1_{\ker\chi}(\alpha)$. Thus,
\begin{equation}
\label{ev8}
    \frac{\int_A\Tilde{g}\1_{\ker\chi}d\nu_A}{\int_A\1_{\ker\chi}d\nu_A}=\frac{\int_A\Tilde{g}\sum_{i=0}^{m-1}\chi^id\nu_A}{m\int_A 1_{\ker\chi}d\nu_A}.
\end{equation}
Furthermore, by \eqref{star}, we have $[A:\ker\chi]=[A:r(G)]=m$. Hence, $\nu_A(\ker\chi)=1/m$. Thus, the desired result follows from \eqref{ev7} and \eqref{ev8}.
\end{proof}

The following corollary considers a special case of Theorem \ref{main1}.

\begin{corollary}
\label{cor-after-main1}
In Theorem \ref{main1}, suppose that $g$ is a multiplicative arithmetic function. Assume that $(h_n)_{n\in \mathbb{N}}$ is a family of isomorphisms $h_n: A(n) \rightarrow \prod_{p^k\Vert n} A(p^k)$, compatible with transition maps $(j_{n_1, n_2})_{n_1 \mid n_2}$, that results in a topological isomorphism $A\cong\prod_pA_p$, where $A_p=\varprojlim A(p^i)$. In addition, let $\chi=\prod_p\chi_p$, where the continuous characters $\chi_p:A_p\to\mu_m$ are trivial except for finitely many $p$'s. Then, \begin{equation*}
\label{tilde-g-p}
\Tilde{g}_p=\sum_{k\geq0}g(p^k)1_{\ker\varphi_{p^k}}\in L^1(\nu_{A_p})
\end{equation*}
and
\begin{equation}
\label{new-cor-12}
    \sum_{n\geq1}\frac{g(n)}{\#G(n)}=\sum_{i=0}^{m-1}\prod_p\int_{A_p}\Tilde{g}_p\chi^i_pd\nu_{A_p},
\end{equation}
where $\nu_{A_p}$ is the normalized Haar measure on $A_p$.
\end{corollary}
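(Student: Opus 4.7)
The plan is to start from the identity furnished by Theorem \ref{main1},
\[
\sum_{n\geq 1}\frac{g(n)}{\#G(n)}=\sum_{i=0}^{m-1}\int_A \Tilde{g}\,\chi^i\,d\nu_A,
\]
and to factorize each integrand over primes so that each integral over $A$ turns into a product of integrals over the factors $A_p$. The hypothesized topological isomorphism $A\cong\prod_p A_p$ makes this factorization possible, and multiplicativity of $g$, together with the product decomposition $\chi=\prod_p\chi_p$, is what lets the factorization pass to the function $\Tilde g$.

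First I would record the geometric fact that, under the isomorphism $A(n)\cong\prod_{p^k\Vert n}A(p^k)$ given by the $h_n$, the projection $\pi_{A,n}:A\to A(n)$ corresponds, via $A\cong\prod_p A_p$, to the product of projections $\varphi_{p^{\nu_p(n)}}:A_p\to A(p^{\nu_p(n)})$, with the convention $\varphi_{p^0}=\mathrm{id}_{A_p}$. Consequently, for $\alpha=(\alpha_p)_p\in A$,
\[
1_{\ker\pi_{A,n}}(\alpha)=\prod_p 1_{\ker\varphi_{p^{\nu_p(n)}}}(\alpha_p).
\]
Since $g$ is multiplicative (so $g(1)=1$ and $g(n)=\prod_p g(p^{\nu_p(n)})$), I would then argue that
\[
\Tilde{g}(\alpha)=\sum_{n\geq 1}g(n)\prod_p 1_{\ker\varphi_{p^{\nu_p(n)}}}(\alpha_p)=\prod_p\sum_{k\geq 0}g(p^k)1_{\ker\varphi_{p^k}}(\alpha_p)=\prod_p \Tilde{g}_p(\alpha_p),
\]
where the middle equality is the usual Euler-product rearrangement. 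Its justification uses absolute convergence: since $r_n$ is injective one has $\#A(n)\geq\#G(n)$, hence $\sum_n|g(n)|/\#A(n)\leq\sum_n|g(n)|/\#G(n)<\infty$, and by multiplicativity of $|g|$ and of $n\mapsto\#A(n)$ this rewrites as the convergent Euler product $\prod_p\sum_{k\geq 0}|g(p^k)|/\#A(p^k)<\infty$. In particular each factor converges, so $\Tilde{g}_p\in L^1(\nu_{A_p})$ with $\int_{A_p}|\Tilde g_p|\,d\nu_{A_p}\leq\sum_{k\geq 0}|g(p^k)|/\#A(p^k)$.

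With the factorization $\Tilde{g}(\alpha)\chi^i(\alpha)=\prod_p\Tilde{g}_p(\alpha_p)\chi_p^i(\alpha_p)$ in hand, and using $|\chi_p^i|=1$, the $L^1$-estimate above gives
\[
\prod_p\int_{A_p}|\Tilde g_p\chi_p^i|\,d\nu_{A_p}\leq\prod_p\sum_{k\geq 0}\frac{|g(p^k)|}{\#A(p^k)}=\sum_{n\geq 1}\frac{|g(n)|}{\#A(n)}<\infty.
\]
This is precisely the finiteness assumption needed to invoke Fubini's theorem on the compact product space $A=\prod_p A_p$ with the product Haar measure $\nu_A=\prod_p\nu_{A_p}$, yielding
\[
\int_A\Tilde g\,\chi^i\,d\nu_A=\prod_p\int_{A_p}\Tilde g_p\,\chi_p^i\,d\nu_{A_p}.
\]
Summing over $i=0,1,\ldots,m-1$ and substituting into the identity from Theorem \ref{main1} yields \eqref{new-cor-12}. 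Note that the assumption that $\chi_p$ is trivial for all but finitely many $p$ is used to make the infinite product well-defined in a natural way: for all but finitely many $p$ the factor reduces to $\int_{A_p}\Tilde g_p\,d\nu_{A_p}=\sum_{k\geq 0}g(p^k)/\#A(p^k)$, independent of $i$.

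The main technical point I expect to be the delicate one is the justification of the rearrangement $\sum_n\prod_p = \prod_p\sum_k$ on the level of the $\Tilde{g}$-series; while it is a formal Euler-product identity, one must be careful that the absolute convergence of $\sum_n|g(n)|/\#A(n)$ is the correct substitute for pointwise absolute convergence of the series defining $\Tilde g$ (which can fail on a $\nu_A$-null set such as $\alpha=e_A$). Once the rearrangement is interpreted as equality in $L^1(\nu_A)$ using the multiplicativity of $\#A$ and the dominating bound $\prod_p\sum_k|g(p^k)|/\#A(p^k)<\infty$, the remaining manipulations are standard applications of Fubini and the exactness of the finite sum over characters.
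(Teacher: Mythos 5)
Your proposal is correct and takes essentially the same route as the paper: start from Theorem \ref{main1}, use multiplicativity of $g$ together with the product decompositions $A\cong\prod_p A_p$, $\nu_A=\prod_p\nu_{A_p}$, $\chi=\prod_p\chi_p$ to get $\Tilde{g}=\prod_p\Tilde{g}_p$ and factor each integral $\int_A\Tilde{g}\chi^i\,d\nu_A$ into $\prod_p\int_{A_p}\Tilde{g}_p\chi_p^i\,d\nu_{A_p}$, with $\Tilde{g}_p\in L^1(\nu_{A_p})$ justified via $\#A(p^k)\geq\#G(p^k)$. The paper states these factorizations without elaboration, whereas you supply the supporting details (the kernel identification $\ker\pi_{A,n}=\prod_p\ker\varphi_{p^{\nu_p(n)}}$, the a.e.\ Euler-product rearrangement, and the Fubini step), so your write-up is a fleshed-out version of the same argument.
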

\begin{proof}
By Theorem \ref{main1}, we have
\begin{equation}
\label{new-cor-1}
    \sum_{n\geq1}\frac{g(n)}{\#G(n)}=\sum_{i=0}^{m-1}\int_A\Tilde{g}\chi^id\nu_A.
\end{equation}
Since $g(n)$ is multiplicative, $A\cong\prod_pA_p$, $\nu_A=\prod_p\nu_{A_p}$, $\chi=\prod_p\chi_p$, and $\Tilde{g}=\prod_p\Tilde{g}_p$, then  \eqref{new-cor-1} yields \eqref{new-cor-12}. Note that, since $\#A(p^k)\geq \#G(p^k)$, then  $\sum_{k\geq 0}\lvert g(p^k)\rvert/\#A(p^k)<\infty$. Hence, by \cite[Theorem 1.38]{rudin},  
$\Tilde{g}_p\in L^1(\nu_{A_p}).$ Thus, the integrals in \eqref{new-cor-12} are finite.
\end{proof}

\section{Proof of Theorem \ref{product-kummer-family}}
\label{S1}

\begin{proof}[Proof of Theorem \ref{product-kummer-family}]
Let the profinite group $A$ and the character $\chi$ be as defined in Section 2. We employ Corollary \ref{cor-after-main1} and compute
$\int_{A_p}\Tilde{g}_pd\nu_{A_p}$ and
$\int_{A_p}\Tilde{g}_p\chi_pd\nu_{A_p}$ for primes $p$.
Since $\ker\varphi_{p^k}$ is a closed subgroup of $A_p$, we have $\nu_{A_p}(\ker\varphi_{p^k})=1/[A_{p}:\ker\varphi_{p^k}]=1/\#A(p^k)$. Observe that
\begin{equation}
\label{sum-in-last-form-1}
    \begin{split}
        \int_{A_p} \Tilde{g}_pd\nu_{A_p}
        & =\int_{A_p}\sum_{k\geq0} g(p^k)1_{\ker\varphi_{p^k}}d\nu_{A_p}\\
        & =\sum_{k\geq0}g(p^k)\nu_{A_p}(\ker\varphi_{p^k})\\
        & =\sum_{k\geq0}\frac{g(p^k)}{\#A(p^k)}.
    \end{split}
\end{equation}

Observe that if $S\subset A_p$ is $\nu_A$-measurable, then
$\int_{S} \chi_p(\alpha x) d\nu_{A_p}(x)= \int_{S} \chi_p(x) d\nu_{A_p}(x)$ for any $\alpha\in S$. From here we conclude that if $\chi_p$ is non-trivial on $S$, then $\int_S \chi_p d\nu_{A_p}=0$. 
Hence, by Proposition \ref{character}, 
\begin{equation}
\label{sum-in-last-form-3}
    \begin{split}
        \int_{A_p}\Tilde{g}_p\chi_pd\nu_{A_p}
        & =\int_{A_p}\left(1_{A_p}\chi_p+g(p)1_{\ker\varphi_p}\chi_p+\dots+g(p^k)1_{\ker\varphi_{p^k}}\chi_p+\dots\right)d\nu_{A_p}\\
        & =0+\sum_{k\geq \ell(p)}g(p^k)\nu_{A_p}(\ker\varphi_{p^k})\\
        & =\sum_{k\geq \ell(p)}\frac{g(p^k)}{\#A(p^k)}.
    \end{split}
\end{equation}

Thus, by Corollary \ref{cor-after-main1} with $m=2$, \eqref{sum-in-last-form-1}, and \eqref{sum-in-last-form-3}, we get \eqref{product-kummer}. 
\end{proof}
\section{Proof of Proposition \ref{TDPK-formula}}
\label{Section 5}
\begin{proof}[Proof of Proposition \ref{TDPK-formula}]

For integer $k\geq 1$ and odd prime $p$, let
\begin{equation}
\label{k'}
 k'=
    \begin{cases}
    0 &\text{if }k\leq \nu_p({e}),\\
    k-\nu_p(e)&\text{if }k>\nu_p(e),\\
    \end{cases}
\end{equation}
and for $k\geq 1$ and $p=2$, let
\begin{equation}
\label{k-prime}
 k'=
    \begin{cases}
    0 &\text{if }k\leq \nu_2({e})~\text{and}~ (a>0~ \text{or}~ e~\text{is~odd}),\\
      1 &\text{if }k\leq \nu_2({e})~\text{and}~(a<0~ \text{and}~ e~\text{is~even}),\\
        k-\nu_2(e)&\text{if }k>\nu_2(e).
    \end{cases}
\end{equation}
Then, from Proposition \ref{new-prop} (iii), we have
\begin{equation}
\label{Apk}
\#A(p^k)=\begin{cases}
p^{k+k'-1}(p-1)&\text{if }k\geq 1,\\
1&\text{if }k=0.
\end{cases}
\end{equation}

Now by employing \eqref{Apk} in \eqref{product-kummer} we get
\begin{equation}
    \label{kummer-sum}
    \sum_{n=1}^{\infty}\frac{g(n)}{\#G(n)}=\prod_p\left(1+\sum_{k\geq 1}\frac{g(p^k)}{p^{k+k'-1}(p-1)}\right)+\prod_{p}\sum_{k\geq\ell(p)}\frac{g(p^k)}{\#A(p^k)}.
\end{equation}

We set $g=1$ in \eqref{kummer-sum} to
get the product expression for the constant in the conjectured asymptotic formula in the Titchmarsh Divisor Problem for a given Kummer family. Therefore, by \eqref{kummer-sum},
\begin{equation}
\label{expression}
    \sum_{n\geq1}\frac{1}{[K_n:\QQ]}=\left(1+\prod_{p\mid 2D}\frac{C_p}{1+B_p}\right)\prod_p\left(1+B_p\right),
\end{equation}
for the following values for $B_p$ and $C_p$.

If $p$ is odd, we have
\begin{equation}
\label{bp}
    B_p= \sum_{k\geq1}\frac{1}{p^{k+k'-1}(p-1)}=\frac{p^{\nu_p(e)+2}+p^{\nu_p(e)+1}-p^2}{p^{\nu_p(e)}(p-1)(p^2-1)},
\end{equation}
and $C_p=B_p$, where $k^\prime$ is given by \eqref{k'}.

For $p=2$, we have the following cases for $B_2$ and  $C_2$ with $k^\prime$ as given by \eqref{k-prime}.

\noindent {\it Case (i).} Let $e$ be odd or $a>0$. Hence, $s=\nu_2(e)+1$. Then $B_2$ is the same as \eqref{bp} with $p=2$. Now, if $D$ is odd; or $4 \Vert D$ and $s\geq2$; or $8 \Vert D$ and $s\geq3$, then
\begin{equation*}
    C_2= \sum_{k\geq \ell(2)}\frac{1}{2^{k+k'-1}}=\frac{2}{2^{\nu_2(e)}(2^2-1)}.
\end{equation*}
Otherwise,
\begin{equation*}
    C_2=\sum_{k\geq \ell(2)}\frac{1}{2^{k+k'-1}}=\frac{2^{\nu_2(e)+1}}{2^{\beta}(2^2-1)},
\end{equation*}
where $\beta=2$ if $4 \Vert D$ and $s=1$; and $\beta=4$ if $8 \Vert D$ and $s\in \{1, 2\}$. 

\noindent {\it Case (ii).} Let $e$ be even and $a<0$.  Then
\begin{equation*}
    B_2= \sum_{k\geq 1}\frac{1}{2^{k+k'-1}}=\frac{2^{\nu_2(e)+2}-2^{\nu_2(e)}-1}{2^{\nu_2(e)}(2^2-1)}.
\end{equation*}
If $8\Vert D$ and $\nu_2(e)= 1$, we have $\ell(2)=2$. Hence,
$$C_2= \sum_{k\geq \ell(2)}\frac{1}{2^{k+k'-1}}= \frac{1}{2^2-1}.$$
Otherwise, we have $\ell(2)=s=\nu_2(e)+2$ and thus
\begin{equation*}
    C_2= \sum_{k\geq \ell(2)}\frac{1}{2^{k+k'-1}}=\frac{1}{2^{\nu_2(e)+1}(2^2-1)}.    
\end{equation*}

By applying the above expressions in \eqref{expression} and by case-by-case simplifying, we get \eqref{kummer-tdp-lastproduct}.
\end{proof}

\section{Serre Curves}
\label{Serre section}
Let $E$ be an elliptic curve defined over $\QQ$. 
Let $\QQ(E[n])$ be the $n$-division field of $E$. By taking the inverse limit of the natural injective maps
\begin{equation*}
    r_n:\Gal(\mathbb{Q}(E[n])/\QQ)\to\Aut(E[n])\cong \GL_2(\ZZ/n\ZZ),
\end{equation*}
over all $n\geq1$, we have an injective profinite homomorphism
\begin{equation*}
    r\colon \Gal(\mathbb{Q}(E[\infty])/\QQ)\to\Aut(E[\infty])\cong\GL_2(\widehat{\ZZ}),
\end{equation*}
where $E[\infty]=\bigcup_{n=1}^{\infty} E[n]$. Let $\Delta$ be the discriminant of any Weierstrass model for $E$.
Set $K=\QQ({\Delta}^{1/2})$ and let
$D$ be the discriminant of $K$. In anticipation of applying Theorem \ref{main1}, let $\det$ be the determinant map $\det:\GL_2(\widehat{\ZZ})\to\widehat{\ZZ}^{\times}$ and 
\begin{equation*}
    \chi_D:\GL_2(\widehat{\ZZ})\stackrel{\det}{\longrightarrow}\widehat{\ZZ}^{\times}\stackrel{\left(\frac{ D}{.}\right)}{\longrightarrow}\mu_2
\end{equation*}
be the composition of $\det$ with the lift to $\widehat{\ZZ}^{\times}$ of the Kronecker symbol attached to $D$. We note that $\GL_2(\ZZ/2\ZZ)\cong S_3$, where $S_3$ is the symmetric group on three letters. Let 
\begin{equation*}
    \psi:\GL_2(\widehat{\ZZ})\to \GL_2(\ZZ/2\ZZ)\cong S_3\stackrel{\sgn}{\longrightarrow}\mu_2
\end{equation*}
be the composition of the projection map from $\GL_2(\widehat{\ZZ})$ to $\GL_2(\ZZ/2\ZZ)$ with the signature character on $S_3$. Let $G=\Gal(\mathbb{Q}(E[\infty])/\QQ)$. For $\eta \in G$ we can show that the image of $r(\eta)$ under $\psi$ is the same as ${\chi_D(r(\eta))=\eta({\Delta}^{1/2})}/{{\Delta}^{1/2}}$ (see (26) in \cite{cojocaru:tdp} and discussion before it). We now set $\chi=\chi_D\cdot\psi$. 

The above construction of the character $\chi$ is described by J.-P. Serre in \cite{serre}.  In addition,  in \cite[Section 5.5]{serre},
Serre shows that the character $\chi$ constructed above is non-trivial 
and $r(G)$ is contained in $\ker\chi$, hence
$[\GL_2(\widehat{\ZZ}):r(G)]\geq2$. We name $E$ a \emph{Serre curve} if
$[\GL_2(\widehat{\ZZ}):r(G)]=2$. This is equivalent to saying that $r(G)=\ker\chi$. Thus, letting $A=\GL_2(\widehat{\ZZ})$, for Serre curve $E$, the sequence
\begin{equation}
\label{exactec}
1 \longrightarrow G \stackrel{r}{\longrightarrow} A \stackrel{\chi}{\longrightarrow} \mu_{2} \longrightarrow 1
\end{equation}
is an exact sequence. In addition for a Serre curve $K=\mathbb{Q}(\Delta^{1/2})$ is a quadratic field (see the first paragraph of \cite[p. 510]{lenstra-stevenhagen-moree} for explanation).

The quadratic character $\chi:\GL_2(\hat{\ZZ})(\cong\prod_p\GL_2(\ZZ_p))\to\mu_2$ can be written as a product of local characters $\chi_p:\GL_2(\ZZ_p)\to\mu_2$. Observe that since $\psi$ factors via $\GL_2(\ZZ/2\ZZ)$, then it factors via $\GL_2(\ZZ_2)$. Let $\psi_{2}:\GL_2(\ZZ_2)\to\mu_2$ be the corresponding homomorphism obtained from factorization of $\psi$ via $\GL_2(\ZZ_2)$. For primes $p\nmid 2D$, let $\chi_p$ be constantly equal to $1$. For odd primes $p\mid D$, let $\chi_p=\chi_{D,p}$ be the lift of the Legendre symbol mod $p$ to $\ZZ_p^{\times}$,  
i.e.,
\begin{equation*}
    \chi_p:\GL_2(\ZZ_p^{\times})\stackrel{\det}{\longrightarrow}\ZZ_p^{\times}\stackrel{}{\longrightarrow}\mu_2
\end{equation*}
where the last map is the composition of {projection map to $\mathbb{Z}/p\mathbb{Z}$} and the Legendre symbol mod $p$. For prime $2$, let $\chi_2=\chi_{D,2}\cdot\psi_{2}$, where $\chi_{D,2}$, similarly to the Kummer case, is the lift of one of the Dirichlet characters mod $8$ to $\ZZ^{\times}_2$ (if $D$ is odd, then $\chi_{D, 2}$ is trivial). 
Therefore, by the above construction of $\chi$, we have the decomposition $\chi=\prod_p\chi_p$.

Let $A_p=\GL_2(\ZZ_p)$ and $A(p^k)=\GL_2(\ZZ/p^k\ZZ)$. The following is an analogous of Proposition \ref{character} for Serre curves.
\begin{proposition}
\label{character-serre}
For a Serre curve $E$, assume the above notations.
Let $\ell(p)$ be the smallest integer $k$ for which $\chi_p$ factors via $A(p^k)$. Then
$$\ell(p)=\left\{\begin{array}{ll}
0&\text{if~} p~ \text{is~ odd~and~}p\nmid D,\\
1&\text{if~} p~ \text{is~ odd~and~}p\mid D,\\
1&\text{if}~ p=2~ \text{and}~ D ~\text{is odd},\\
2&\text{if}~ p=2~ \text{and}~ 4\Vert D,\\
3&\text{if}~ p=2~ \text{and}~ 8\Vert D.\\ 
\end{array}
\right.
$$

\end{proposition}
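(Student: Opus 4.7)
The plan is to mirror the proof of Proposition \ref{character}, but the situation is considerably simpler here because the local characters $\chi_p$ are built out of two well-understood pieces (a lifted Kronecker/Legendre symbol and the signature of $\GL_2(\mathbb{Z}/2\mathbb{Z})$) whose conductors are immediately visible, and there is no radical tower to analyze. Concretely, I would reformulate the statement as: $\ell(p)$ is the smallest $k$ such that $\chi_p$ is trivial on $\ker\varphi_{p^k}$, where $\varphi_{p^k}\colon A_p=\GL_2(\ZZ_p)\to A(p^k)=\GL_2(\ZZ/p^k\ZZ)$ is the reduction map, and then verify this for each case by recording (i) the smallest $k$ through which each factor of $\chi_p$ descends, and (ii) non-triviality on the next smaller level.

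The cases $p\nmid 2D$ and $p$ odd with $p\mid D$ are immediate. When $p\nmid 2D$ we set $\chi_p=1$, so $\ell(p)=0$. When $p\mid D$ is odd, the character $\chi_p=\chi_{D,p}$ is the composition $\GL_2(\ZZ_p)\xrightarrow{\det}\ZZ_p^{\times}\to(\ZZ/p\ZZ)^{\times}\to\mu_2$ with the last map the Legendre symbol; this clearly factors through $A(p)$ but not through the trivial group, hence $\ell(p)=1$.

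The prime $p=2$ requires a small case split. The key observations are: (a) $\psi_2$ factors through $\GL_2(\ZZ/2\ZZ)=A(2)$ via the signature of $S_3$ and is non-trivial, so its conductor level is exactly $1$; (b) by the same discussion of Dirichlet characters mod $8$ used in Section \ref{section:character}, $\chi_{D,2}$ is trivial if $D$ is odd, has conductor level $2$ (i.e., factors through $A(4)$ but not $A(2)$) if $4\Vert D$, and has conductor level $3$ (i.e., factors through $A(8)$ but not $A(4)$) if $8\Vert D$. Writing $\chi_2=\chi_{D,2}\cdot\psi_2$ and using that the conductor level of a product is the maximum of the conductor levels when these differ (since otherwise we could recover the factor of larger level as a product of characters of strictly smaller level), I read off $\ell(2)=\max(1,\,\text{conductor level of }\chi_{D,2})$. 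This gives $\ell(2)=1,2,3$ in the three respective subcases, matching the statement.

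The only mild subtlety, which I view as the main (though minor) point to handle carefully, is the non-triviality on $\ker\varphi_{2^{k-1}}$ in the $4\Vert D$ and $8\Vert D$ subcases: one must argue that $\chi_2$ really does not descend one level further. Here the clean argument is that $\psi_2$ already descends to level $1$, so if $\chi_2$ descended to level $k-1$, then $\chi_{D,2}=\chi_2\cdot\psi_2$ would also descend to level $k-1$, contradicting the conductor of the Dirichlet character generating $\chi_{D,2}$. Unlike the Kummer situation, no explicit matrix witnesses (analogous to those used in Proposition \ref{character}) are needed because $\psi_2$ and $\chi_{D,2}$ are defined on independent quotients (signature on reduction mod $2$, Kronecker on determinant mod $8$), so the conductor-level calculus is straightforward.
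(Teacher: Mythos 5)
Your proof is correct and follows essentially the same route as the paper's: handle $p\nmid 2D$ and odd $p\mid D$ directly, then for $p=2$ read off the levels of the two factors of $\chi_2$, namely $\psi_2$ (level $1$) and $\chi_{D,2}$ (trivial, level $2$, or level $3$ according as $D$ is odd, $4\Vert D$, or $8\Vert D$), and combine. Your explicit argument that the level of a product of quadratic characters is the maximum of the levels when these differ is precisely the step the paper leaves implicit, so your write-up is a faithful, slightly more detailed version of the paper's proof.
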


\begin{proof}
If $p\nmid 2D$, then $\chi_p$ is constanly equal to $1$. Hence, it factors via $A(1)$. If $p$ is odd and  $p\mid D$, then $\chi_p$ is the Legendre symbol mod $p$, and so it factors via $A(p)$, and since it is non-trivial, it does not factor via $A(1)$. The result for $p=2$ follows from the construction of $\chi_2$ described above, noting that the smallest integer $k$ for which    
$\psi_2$ factors via $A(p^k)$ is $k=1$, for $4\Vert D$ the smallest such $k$ is $k=2$, and for $8\Vert D$  the smallest such $k$ is $k=3$.
\end{proof}

We are now ready to prove our last remaining assertion.

\begin{proof}[Proof of Proposition \ref{prop-Serre}]
Following steps similar to the proof of Theorem \ref{product-kummer-family} and by employing  Corollary \ref{cor-after-main1} with $m=2$,  Proposition \ref{character-serre}, and, for integer $n\geq 1$,
\begin{equation*}
\label{|GL|}
    \#\GL_2(\ZZ/n\ZZ)=\prod_{p^e\:\Vert\:n}p^{4e-3}(p^2-1)(p-1)
\end{equation*}
(see \cite[p. 231]{K}) we have the stated product expression.
\end{proof}

\medskip\par
\noindent{\bf Acknowledgements.} 
The authors thank the reviewers for their valuable comments and suggestions.
The authors thank David Basil and Solaleh Bolvardizadeh for help computing the explicit constants $ c_a$ of Proposition\ref{TDPK-formula} for certain values of $a$.

\section*{Declarations}

\subsection*{Funding} This research is partially supported by NSERC.
\subsection*{Data availability statement} Data sharing does not apply to this article as no datasets were generated or analyzed during the current study.

\subsection*{Conflict of interest} The authors have no conflicts of interest to declare relevant to this article's content.

\begin{rezabib} 
\bib{AG}{article}{
   author={Akbary, Amir},
   author={Ghioca, Dragos},
   title={A geometric variant of Titchmarsh divisor problem},
   journal={Int. J. Number Theory},
   volume={8},
   date={2012},
   number={1},
   pages={53--69},
   issn={1793-0421},
   review={\MR{2887882}},
   doi={10.1142/S1793042112500030},
}

\bib{AF}{article}{
   author={Akbary, Amir},
   author={Felix, Adam Tyler},
   title={On the average value of a function of the residual index},
   journal={Springer Proc. Math. Stat.},
   volume={251},
   date={2018},
   pages={19--37},
   review={\MR{3880381}},
}

\bib{artin}{book}{
   author={Artin, Emil},
   title={The collected papers of Emil Artin},
   note={Edited by Serge Lang and John T. Tate},
   publisher={Addison-Wesley Publishing Co., Inc., Reading, Mass.-London},
   date={1965},
   pages={xvi+560 pp. (2 plates)},
   review={\MR{0176888}},
}

\bib{BLSW}{article}{
   author={Bach, Eric},
   author={Lukes, Richard},
   author={Shallit, Jeffrey},
   author={Williams, H. C.},
   title={Results and estimates on pseudopowers},
   journal={Math. Comp.},
   volume={65},
   date={1996},
   number={216},
   pages={1737--1747},
   issn={0025-5718},
   review={\MR{1355005}},
}

 \bib{cojocaru:tdp}{article}{
   author={Bell, Renee},
   author={Blakestad, Clifford},
   author={Cojocaru, Alina Carmen},
   author={Cowan, Alexander},
   author={Jones, Nathan},
   author={Matei, Vlad},
   author={Smith, Geoffrey},
   author={Vogt, Isabel},
   title={Constants in Titchmarsh divisor problems for elliptic curves},
   journal={Res. Number Theory},
   volume={6},
   date={2020},
   number={1},
   pages={Paper No. 1, 24},
   issn={2522-0160},
   review={\MR{4041152}},
   doi={10.1007/s40993-019-0175-9},
}
 
 \bib{cox}{book}{
   author={Cox, David A.},
   title={Primes of the form $x^2 + ny^2$},
   series={Pure and Applied Mathematics (Hoboken)},
   edition={2},
   note={Fermat, class field theory, and complex multiplication},
   publisher={John Wiley \& Sons, Inc., Hoboken, NJ},
   date={2013},
   pages={xviii+356},
   isbn={978-1-118-39018-4},
   review={\MR{3236783}},
   doi={10.1002/9781118400722},
}
	 
 \bib{davenport}{book}{
   author={Davenport, Harold},
   title={Multiplicative number theory},
   series={Graduate Texts in Mathematics},
   volume={74},
   edition={3},
   note={Revised and with a preface by Hugh L. Montgomery},
   publisher={Springer-Verlag, New York},
   date={2000},
   pages={xiv+177},
   isbn={0-387-95097-4},
   review={\MR{1790423}},
} 

\bib{felix-murty}{article}{
   author={Felix, Adam Tyler},
   author={Murty, M. Ram},
   title={A problem of Fomenko's related to Artin's conjecture},
   journal={Int. J. Number Theory},
   volume={8},
   date={2012},
   number={7},
   pages={1687--1723},
   issn={1793-0421},
   review={\MR{2968946}},
   doi={10.1142/S1793042112500984},
}

\bib{FA}{book}{
   author={Fried, Michael D.},
   author={Jarden, Moshe},
   title={Field arithmetic},
   series={Ergebnisse der Mathematik und ihrer Grenzgebiete. 3. Folge. A
   Series of Modern Surveys in Mathematics [Results in Mathematics and
   Related Areas. 3rd Series. A Series of Modern Surveys in Mathematics]},
   volume={11},
   edition={3},
   note={Revised by Jarden},
   publisher={Springer-Verlag, Berlin},
   date={2008},
   pages={xxiv+792},
   isbn={978-3-540-77269-9},
   review={\MR{2445111}},
}

\bib{Hooley}{article}{
   author={Hooley, Christopher},
   title={On Artin's conjecture},
   journal={J. Reine Angew. Math.},
   volume={225},
   date={1967},
   pages={209--220},
   issn={0075-4102},
   review={\MR{207630}},
   doi={10.1515/crll.1967.225.209},
}

\bib{K}{book}{
   author={Koblitz, Neal},
   title={Introduction to elliptic curves and modular forms},
   series={Graduate Texts in Mathematics},
   volume={97},
   edition={2},
   publisher={Springer-Verlag, New York},
   date={1993},
   pages={x+248},
   isbn={0-387-97966-2},
   review={\MR{1216136}},
   doi={10.1007/978-1-4612-0909-6},
}

\bib{kowalski}{article}{
   author={Kowalski, E.},
   title={Analytic problems for elliptic curves},
   journal={J. Ramanujan Math. Soc.},
   volume={21},
   date={2006},
   number={1},
   pages={19--114},
   issn={0970-1249},
   review={\MR{2226355}},
}


\bib{L}{article}{
   author={Laxton, R. R.},
   title={On groups of linear recurrences. I},
   journal={Duke Math. J.},
   volume={36},
   date={1969},
   pages={721--736},
   issn={0012-7094},
   review={\MR{0258781}},
}

\bib{lenstra-stevenhagen-moree}{article}{
   author={Lenstra, H. W., Jr.},
   author={Moree, P.},
   author={Stevenhagen, P.},
   title={Character sums for primitive root densities},
   journal={Math. Proc. Cambridge Philos. Soc.},
   volume={157},
   date={2014},
   number={3},
   pages={489--511},
   issn={0305-0041},
   review={\MR{3286520}},
   doi={10.1017/S0305004114000450},
}

\bib{MS}{article}{
   author={Moree, P.},
   author={Stevenhagen, P.},
   title={Computing higher rank primitive root densities},
   journal={Acta Arith.},
   volume={163},
   date={2014},
   number={1},
   pages={15--32},
   issn={0065-1036},
   review={\MR{3194054}},
   doi={10.4064/aa163-1-2},
}

\bib{Papa}{article}{
   author={Pappalardi, F.},
   title={On Hooley's theorem with weights},
   note={Number theory, II (Rome, 1995)},
   journal={Rend. Sem. Mat. Univ. Politec. Torino},
   volume={53},
   date={1995},
   number={4},
   pages={375--388},
   issn={0373-1243},
   review={\MR{1452393}},
}

\bib{rudin}{book}{
   author={Rudin, Walter},
   title={Real and complex analysis},
   edition={3},
   publisher={McGraw-Hill Book Co., New York},
   date={1987},
   pages={xiv+416},
   isbn={0-07-054234-1},
   review={\MR{924157}},
}

\bib{course-in-arithmetic}{book}{
   author={Serre, J.-P.},
   title={A course in arithmetic},
   series={Graduate Texts in Mathematics, No. 7},
   note={Translated from the French},
   publisher={Springer-Verlag, New York-Heidelberg},
   date={1973},
   pages={viii+115},
   review={\MR{0344216}},
}

\bib{serre}{article}{
   author={Serre, Jean-Pierre},
   title={Propri\'{e}t\'{e}s galoisiennes des points d'ordre fini des courbes
   elliptiques},
   language={French},
   journal={Invent. Math.},
   volume={15},
   date={1972},
   number={4},
   pages={259--331},
   issn={0020-9910},
   review={\MR{387283}},
   doi={10.1007/BF01405086},
}

\bib{stephens}{article}{
   author={Stephens, P. J.},
   title={Prime divisors of second-order linear recurrences. I},
   journal={J. Number Theory},
   volume={8},
   date={1976},
   number={3},
   pages={313--332},
   issn={0022-314X},
   review={\MR{0417081}},
   doi={10.1016/0022-314X(76)90010-X},
}

\bib{S}{article}{
   author={Stevenhagen, Peter},
   title={The correction factor in Artin's primitive root conjecture},
   language={English, with English and French summaries},
   note={Les XXII\`emes Journ\'{e}es Arithmetiques (Lille, 2001)},
   journal={J. Th\'{e}or. Nombres Bordeaux},
   volume={15},
   date={2003},
   number={1},
   pages={383--391},
}

\bib{wagstaff}{article}{
   author={Wagstaff, Samuel S., Jr.},
   title={Pseudoprimes and a generalization of Artin's conjecture},
   journal={Acta Arith.},
   volume={41},
   date={1982},
   number={2},
   pages={141--150},
   issn={0065-1036},
   review={\MR{674829}},
   doi={10.4064/aa-41-2-141-150},
}

\end{rezabib}

\end{document}